\renewcommand{\sinh}{\operatorname{sinh}}
\renewcommand{\cosh}{\operatorname{cosh}}
\newtheorem{thm}{Theorem}
\title{One-dimensional and planar random motions with variable propagation speeds}
\author{ 
	\href{https://orcid.org/0000-0002-6163-044X}{Enzo Orsingher} \\
	Department of Statistical Sciences\\
	Sapienza University of Rome\\
	\texttt{enzo.orsingher@uniroma1.it}\\
	\And
\href{0000-0002-6421-533X}{Manfred Marvin Marchione}\\
	Independent Researcher\\
	\texttt{manfredmarvin.marchione@gmail.com}}
\date{\today}
\begin{document}
\maketitle

\begin{abstract}
In this paper, we study univariate and planar random motions with variable propagation speeds. We first consider motions with space-varying velocity, which can be reduced to constant-velocity motions by means of suitable nonlinear transformations. We examine a special case of a motion which is confined within the unit interval. To provide a general expression of the moments of this process, we introduce a new family of polynomials which generalize the classical Euler polynomials. We then examine a planar extension of this process which moves along orthogonal directions. A process with velocity depending on the direction is also examined, and its mean conditional on the initial direction and the number of direction changes is given in terms of confluent hypergeometric functions. We conjecture that, in the hydrodynamic limit, this process is absorbed at a point in an arbitrarily small time. We finally study a motion with time-dependent velocity. We prove that this process can be represented as an integral with respect to a standard telegraph process, and we obtain its covariance function explicitly. Moreover, we show that this process behaves as a It\^{o} integral with respect to Brownian motion in the hydrodynamic limit.
\end{abstract}

\keywords{Telegraph process, hyperbolic equations, Euler polynomials}

\section{Introduction}
\noindent While random motions with constant velocity have been extensively studied in the literature, random motions with variable velocity in space and time have received little attention. The case of velocity depending on space and time was first considered by Masoliver and Weiss \cite{masoliverweiss}. The authors discussed the construction of a persistent random walk on a one-dimensional lattice, and they proved that, by taking the limit in the continuum under a suitable scaling, the distribution of the random walk is asymptotically described by a telegraph-type equation with non-constant velocity. The case of space-varying velocity was further explored by Garra and Orsingher \cite{garraorsingher}, who studied different variants of the process, such as that with drift or that in which the direction changes are governed by a non-homogeneous Poisson process. A common approach to studying the distribution of such processes is to reduce them to constant-velocity motions through suitable nonlinear transformations.\\
\noindent In this paper, we study one-dimensional and planar random motions with non-constant velocity. In particular, we study both the cases in which the velocity varies with space and with time, as well as a special case in which the functional form of the velocity depends on the direction along which the process is moving. Our analysis starts by reviewing and extending the study of random motions with space-varying velocity. Hence, for a given continuously differentiable positive-valued function $\mathtt{v}:\mathbb{R}\to\mathbb{R}^+$, we consider a one-dimensional stochastic process $\{X(t)\}_{t\ge0}$ which can move rightwards with velocity $\mathtt{v}(X(t))$ or leftwards with velocity $-\mathtt{v}\left(X(t)\right)$. At the initial time $t=0$, the process lies at a point $x_0$ and starts moving leftwards or rightwards with probability $\frac{1}{2}$. The process then changes direction at Poisson times. The Poisson process which governs the direction changes is assumed to have constant intensity $\lambda>0$. It can be verified that the paths of the process $X(t)$ satisfy the following ordinary differential equation almost everywhere \begin{equation*}\frac{d X(t)}{dt}=\pm \mathtt{v}(X(t))\end{equation*} where the sign of the right-hand side of the equation depends on whether the process is moving leftwards or rightwards. We emphasize that the paths are not differentiable at the instants when direction changes occur. However, since these times are determined by a Poisson process, they form a set of Lebesgue measure zero, and thus the paths are almost everywhere differentiable. Our first result is that the probability density function $f(x,t)$ of the process $X(t)$ is a solution to the partial differential equation
\begin{equation}\label{intro:pde}\frac{\partial^2 f}{\partial t^2}+2\lambda \frac{\partial f}{\partial t}=\frac{\partial}{\partial x} \left\{\mathtt{v}(x)\frac{\partial}{\partial x}\big[\mathtt{v}(x)\,f\big]\right\},\qquad x\in\mathbb{R},\;t>0.\end{equation}
Observe that equation (\ref{intro:pde}) slightly differs from that previously obtained by other authors like Masoliver and Weiss \cite{masoliverweiss} and Garra and Orsingher \cite{garraorsingher}, in the sense that the velocity and the spatial differential operator appear interchanged. The ambiguity arises because the process $X(t)$ can be expressed as a nonlinear transformation of a standard telegraph process with constant velocity. The formulations proposed by the aforementioned authors do not account for the differential term resulting from this change of variables. The correct equations consistent with (\ref{intro:pde}) were first derived by Tailleur and Cates \cite{tailleur} by means of physical arguments, and were later studied by Angelani et al. \cite{angelani2} and Angelani and Garra \cite{angelani1}. In this paper, a detailed probabilistic derivation of equation (\ref{intro:pde}) is presented.\\
\noindent It is well-known that, denoting by $\{\mathcal{T}(t)\}_{t\ge0}$ a standard telegraph process with constant velocity $c>0$ and initial value $X(0)=0$, the following representation holds
\begin{equation}\mathcal{T}(t)=c\int_{x_0}^{X(t)}\frac{1}{\mathtt{v}(w)}\,dw.\label{intro:TXrelat}\end{equation}
Clearly, since the distribution of $\mathcal{T}(t)$ is widely known, formula (\ref{intro:TXrelat}) permits us to obtain the distribution of $X(t)$ explicitly. In this paper, we are interested in studying finite-velocity random motions which are confined in bounded domains. A similar problem was investigated by Angelani and Garra \cite{angelani2}. The approach adopted by these authors consists in solving equation (\ref{intro:pde}) with suitable boundary conditions, in order to impose barriers to the process. In contrast, we show that choosing suitable velocity functions naturally yields processes whose support lies within a bounded region. We discuss a special case of space-varying velocity, namely the case $\mathtt{v}(x)=c\,x(1-x)$. Clearly, this velocity function vanishes linearly at both points $x=0$ and $x=1$. As we will discuss later, this implies that these points are unreachable in finite time. Hence, under the assumption that $x_0\in(0,1)$, the process is naturally confined within the interval $(0,1)$, even for arbitrarily large values of $t$. Indeed, the following representation holds for $X(t)$:
 \begin{equation}\label{intro:01proc}X(t)=\frac{x_0\,e^{\mathcal{T}(t)}}{1-x_0\,\left(1-e^{\mathcal{T}(t)}\right)}.\end{equation}
Hence, the exact distribution of $X(t)$ can be obtained easily. However, due to the nonlinearity of the relationship between $X(t)$ and $\mathcal{T}(t)$, finding the moments of $X(t)$ is a more challenging task. Since we are interested in finding an explicit expression of the moments of arbitrary order, we introduce in the present paper a family of polynomials $E_n^{(a,\theta)}(x)$, with $a,\theta>0$, with generating function
\begin{equation}\left(\frac{\theta+1}{\theta e^t+1}\right)^ae^{xt}=\sum_{n=0}^{+\infty}\frac{E_n^{(a,\theta)}(x)}{n!}\,t^n.\label{intro:taylorseriesstatement}\end{equation} 

\noindent These polynomials provide a generalization of the well-known Euler polynomials $E_n(x),\, n\in\mathbb{N}$, (see Gradshteyn and Ryzhik \cite{gradshteyn}). We emphasize that an extension of the Euler polynomials involving the parameter $a$ has already been studied in the literature (see Roman \cite{roman}). In this paper, we further generalize these polynomials by introducing the additional parameter $\theta$. We obtain the explicit general expression of our generalized polynomials and we show that, similarly to the classical Euler polynomials, they form an Appell sequence. Moreover, by writing that
\begin{equation*}X(t)^a=x_0^a\sum_{n=0}^{+\infty}\frac{E_n^{\left(a,\frac{x_0}{1-x_0}\right)}(a)}{n!}\;\mathcal{T}(t)^n\end{equation*} we are able to express the moments of $X(t)$ in terms of infinite series. In particular, we show that
\begin{align}\mathbb{E}\left[X(t)^a\right]=x_0^a\;\sqrt{\frac{\lambda t}{2}}\,e^{-\lambda t}\sum_{n=0}^{+\infty}&\frac{\Gamma\left(n+\frac{1}{2}\right)\;E_{2n}^{\left(a,\frac{x_0}{1-x_0}\right)}(a)}{(2n)!}\cdot\nonumber\\
&\qquad \cdot\left[I_{n+\frac{1}{2}}(\lambda t)+I_{n-\frac{1}{2}}(\lambda t)\right]\left(\frac{2c^2t}{\lambda}\right)^n.\label{intro:moma}\end{align}
We also study the hydrodynamic limit of the process $X(t)$ for $\lambda,c\to+\infty$, with $\frac{\lambda}{c^2}\to1$, which reads
$$\lim_{\lambda,c\to+\infty}X(t)\overset{i.d.}{=}\frac{x_0\,e^{B(t)}}{1-x_0\,\left(1-e^{B(t)}\right)}$$  where $\{B(t)\}_{t\ge0}$ is a standard Brownian motion. Thus, by taking the hydrodynamic limit of formula (\ref{intro:moma}), we prove that
$$\mathbb{E}\left[\left(\frac{x_0\,e^{B(t)}}{1-x_0\,\left(1-e^{B(t)}\right)}\right)^a\right]=\frac{x_0^a}{\sqrt{\pi}}\sum_{n=0}^{+\infty}\frac{\Gamma\left(n+\frac{1}{2}\right)\;E_{2n}^{\left(a,\frac{x_0}{1-x_0}\right)}(a)}{(2n)!}\;\left(2t\right)^n.$$

Our next step is to investigate a planar extension of random motions with space-varying velocity. In contrast to Garra and Orsingher \cite{garraorsingher}, who discussed a planar motion with an infinite number of directions, we study the case of a planar random motion with orthogonal directions. Thus, we consider a bivariate process $(X(t),\,Y(t))$ which can move along four possible directions $d_j,\;j=0,1,2,3$, namely
$$d_j=\left(\cos\left(\frac{\pi j}{2}\right),\;\sin\left(\frac{\pi j}{2}\right)\right)$$ where $d_j=d_{j+4n}$ for all integers $n$. At the initial time $t=0$, the process lies at the point $(x_0,y_0)$ and starts moving along one of the four possible directions, chosen randomly with equal probability $\frac{1}{4}$. The direction changes are paced by a homogeneous Poisson process $N(t)$ with intensity $\lambda$. Each time a Poisson event occurs, the process $(X(t),\,Y(t))$ changes direction either clockwise, from $d_j$ to $d_{j-1}$, or counterclockwise, from $d_j$ to $d_{j+1}$, each with probability $\frac{1}{2}$. The key feature of our model is that the velocity of the motion depends on the current position along the direction of motion. For a fixed continuously differentiable function $\mathtt{v}(\cdot)$, the velocity is $\mathtt{v}(X(t))$ when the process is moving horizontally, while it is $\mathtt{v}(Y(t))$ when the process is moving vertically. Under this assumption, the process $(X(t),\,Y(t))$ can be mapped into a random motion with constant velocity $c>0$ by means of the transformation 
\begin{equation}\begin{cases}U(t)=c\,\int_{x_0}^{X(t)}\frac{1}{\mathtt{v}(z)}\,dz\\[1.2ex] V(t)=c\,\int_{y_0}^{Y(t)}\frac{1}{\mathtt{v}(z)}\,dz.\end{cases}\label{introUVXY}\end{equation} The stochastic process $(U(t),\,V(t))$ is therefore a random motion with constant velocity and orthogonal directions, whose distribution has been studied by several authors (see, for example, Orsingher and Marchione \cite{orsinghermarchione}). Therefore, using formula (\ref{introUVXY}) permits us to obtain the distribution of $(X(t),\,Y(t))$ in explicit form. In particular, denoting by $R_t$ the support of the distribution of $(X(t),\,Y(t))$ and by $\text{Int}(R_t)$ its interior, it holds that 
\begin{align}&f(x,y,t)=\frac{e^{-\lambda t}}{2\,\mathtt{v}(x)\,\mathtt{v}(y)}\cdot\Bigg[\frac{\lambda}{2}\, I_0\Bigg(\frac{\lambda}{2}\sqrt{t^2-\left(\int_{x_0}^x\frac{dz}{\mathtt{v}(z)}+\int_{y_0}^y\frac{dz}{\mathtt{v}(z)}\right)^2}\;\Bigg)\nonumber\\
&\qquad\qquad+\frac{\partial}{\partial t}I_0\Bigg(\frac{\lambda}{2}\sqrt{t^2-\left(\int_{x_0}^x\frac{dz}{\mathtt{v}(z)}+\int_{y_0}^y\frac{dz}{\mathtt{v}(z)}\right)^2}\;\Bigg)\Bigg]\nonumber\\
&\qquad\qquad\quad\cdot\Bigg[\frac{\lambda}{2}\, I_0\Bigg(\frac{\lambda}{2}\sqrt{t^2-\left(\int_{x_0}^x\frac{dz}{\mathtt{v}(z)}-\int_{y_0}^y\frac{dz}{\mathtt{v}(z)}\right)^2}\Bigg)\nonumber\\
&\qquad\qquad\qquad+\frac{\partial}{\partial t}I_0\Bigg(\frac{\lambda}{2}\sqrt{t^2-\left(\int_{x_0}^x\frac{dz}{\mathtt{v}(z)}-\int_{y_0}^y\frac{dz}{\mathtt{v}(z)}\right)^2}\Bigg)\Bigg],\; (x,y)\in \text{Int}(R_t).\nonumber\end{align}
We emphasize that the distribution of the process has a singular component on the boundary of its support. As we will see, to describe the distribution of the process on the boundary $\partial R_t$, it is convenient to define a suitable coordinate system. We provide a few alternatives. However, we believe that the most intuitive way to identify the position of $(X(t),\,Y(t))$ on $\partial R_t$ is to use its abscissa $X(t)$. Restricting the analysis to the first quadrant of the cartesian plane for simplicity, we prove that \begin{align}\mathbb{P}\big(\,(X&(t),\,Y(t))\in\partial R_t,\;X(t)>0,\;Y(t)>0,\;X(t)\in dx\big)/dx\nonumber\\=&\frac{e^{-\lambda t}}{2\,\mathtt{v}(x)}\Bigg[\frac{\lambda}{2}\,I_0\left(\lambda\;\sqrt{\int_{x_0}^x\frac{dz}{\mathtt{v}(z)}\,\left(t-\int_{x_0}^x\frac{dz}{\mathtt{v}(z)}\right)}\;\right)\nonumber\\&\qquad\qquad+\frac{\partial}{\partial t}\,I_0\left(\lambda\;\sqrt{\int_{x_0}^x\frac{dz}{\mathtt{v}(z)}\,\left(t-\int_{x_0}^x\frac{dz}{\mathtt{v}(z)}\right)}\;\right)\Bigg].\nonumber\end{align} We then examine the special case
$$\mathtt{v}(x)=c\,(1-x^2)$$ where formula (\ref{introUVXY}) reduces to
$$\begin{cases}X(t)=\tanh\left(U(t)\right)\\Y(t)=\tanh\left(V(t)\right).\end{cases}$$ In this case, we give an explicit representation of the support $R_t$, and we provide the exact distributions in the interior and the boundary of $R_t$. We note that, similarly to the univariate case discussed above, the bivariate process $(X(t),\,Y(t))$ is now confined within the square $[-1,1]^2$. This follows from the fact that the selected velocity function vanishes with linear rate at the points $x=\pm1$.\\
The next problem we investigate in this paper concerns a random motion in which the velocity varies with the current direction. In particular, we consider a finite-velocity univariate random motion $X(t)$ and we denote by $D(t)$ the direction of $X(t)$. We say that $D(t)=d_0$ when $X(t)$ is moving rightwards, while $D(t)=d_1$ when $X(t)$ is moving leftwards. Given two positive-valued continuously differentiable functions $\mathtt{v}_0(\cdot)$ and $\mathtt{v}_1(\cdot)$, we assume that the process moves with velocity $\mathtt{v}_0\left(X(t)\right)$ when it moves rightwards, that is when $D(t)=d_0$, and with velocity $-\mathtt{v}_1\left(X(t)\right)$ when $D(t)=d_1$. As usual, we assume that the direction changes are governed by a homogenous Poisson process $N(t)$ with intensity $\lambda$. 
Denoting by $f_j(x,t)$, for $j=0,1$, the density functions $$f_j(x,t)=\mathbb{P}\left(X(t)\in\mathop{dx},\,D(t)=d_j\right)/\mathop{dx}$$ it can be verified that the following system of partial differential equations holds
\begin{equation}
\begin{dcases}
\frac{\partial f_0}{\partial t}=-\frac{\partial}{\partial x}\big[\mathtt{v}_0(x)\,f_0\big]+\lambda f_1-\lambda f_0\\
\frac{\partial f_1}{\partial t}=\frac{\partial}{\partial x}\big[\mathtt{v}_1(x)\,f_1\big]+\lambda f_0-\lambda f_1.
\end{dcases}\label{intro:directiondependentsystem}
\end{equation}
Unfortunately, in contrast to the usual case where the functional form of the velocity does not depend on the direction, combining the equations of the system (\ref{intro:directiondependentsystem}) to obtain a second-order partial differential equation for the probability density function $f(x,t)=f_0(x,t)+f_1(x,t)$ seems not possible. Moreover, finding the exact distribution of $X(t)$ in general is a difficult task. To illustrate some characteristics of the process, we simplify the analysis by considering a special case. In particular, we consider the case
$$\mathtt{v}_0(x)=c\,(1-x),\qquad \mathtt{v}_1(x)=c\,x$$ with $c>0$, and we assume that $X(0)=x_0$, with $x_0\in(0,1)$. denoting by $T_k,\;k\ge0$, the arrival times of the Poisson process $N(t)$, that is $$T_k=\inf\{t:\;N(t)\ge k\}$$ we show that the following recursive relationship holds the $X(t)$
$$X(T_k)=X(T_{k-1})\,e^{-c(T_k-T_{k-1})}+\left(1-e^{-c(T_k-T_{k-1})}\right)\,\mathbbm{1}\{D(T_{k-1})=d_0\},\qquad k\ge1.$$ By solving the recursion above, we give an explicit representation of the position of the process $X(t)$ in terms of the intertimes of the Poisson process $N(t)$. This representation permits us to obtain the mean of $X(t)$ conditional on the initial direction and the number of direction changes. In particular, we prove that, for $n\in\mathbb{N}$,
\begin{flalign*}
\mathbb{E}\left[X(t)\big\lvert D(0)=d_0,\, N(t)=2k\right]=1+x_0 e^{-ct}-e^{-ct}\sum_{j=0}^{2k}(-1)^j{}_1F_1\big(j;2k+1;ct\big)
\end{flalign*}
\vspace{-12mm}

\begin{flalign*}
\mathbb{E}\left[X(t)\big\lvert D(0)=d_0,\, N(t)=2k+1\right]=x_0e^{-ct}-e^{-ct}\sum_{j=0}^{2k+1}(-1)^j{}_1F_1\big(j;2(k+1);ct\big)
\end{flalign*}
\vspace{-12mm}

\begin{flalign*}
\mathbb{E}\left[X(t)\big\lvert D(0)=d_1,\, N(t)=2k\right]=-(1-x_0)e^{-ct}+e^{-ct}\sum_{j=0}^{2k}(-1)^j{}_1F_1\big(j;2k+1;ct\big)
\end{flalign*}
\vspace{-12mm}

\begin{flalign*}
\mathbb{E}&\left[X(t)\big\lvert D(0)=d_1,\, N(t)=2k+1\right]\\&\qquad\qquad=1-(1-x_0)e^{-ct}+e^{-ct}\sum_{j=0}^{2k+1}(-1)^j{}_1F_1\big(j;2(k+1);ct\big).\qquad\qquad\qquad\end{flalign*}

where ${}_1F_1(a;b;z)$ is the confluent hypergeometric function
$${}_1F_1(a;b;z)=\frac{\Gamma(b)}{\Gamma(a)\,\Gamma(b-a)}\,\int_0^1e^{zu}u^{a-1}(1-u)^{b-a-1}du,\;\; z\in\mathbb{C}$$ where $\Re(a)>0,\;\Re(b)>0$. It follows immediately that the unconditional mean reads
\begin{equation}\label{intro:unconditionaln}\mathbb{E}\left[X(t)\right]=x_0e^{-ct}+\frac{1-e^{-ct}}{2}.\end{equation}
Formula (\ref{intro:unconditionaln}) provides some interesting information concerning the behaviour of the process in the hydrodynamic limit where $\lambda,c\to+\infty$ with the usual scaling $\frac{\lambda}{c^2}\to1$. Indeed, observe that the limiting mean is $$\lim_{\lambda,c\to+\infty}\mathbb{E}\left[X(t)\right]=\frac{1}{2}$$ which is notably independent from the starting point $x_0$. As we illustrate in various examples, in the standard case where the velocity function is independent of the direction, finite-velocity random motions typically converge to diffusion processes in the hydrodynamic limit. However, in our model, the fact that the limiting mean is independent of the starting point $x_0$ is not compatible with the behaviour of a nontrivial diffusion process, for which the distribution of the process is concentrated near the starting point and spreads around the initial value over time. Hence, when the functional form of the velocity of the process depends on the current direction, the hydrodynamic limit is not guaranteed to be a diffusion process in general. Moreover, observe that the asymptotic mean is independent of $t$. Therefore, even if the limiting process starts at a point $x_0\neq\frac{1}{2}$, its mean approaches $\frac{1}{2}$ in an arbitrarily small time, suggesting some degenerate behaviour of $X(t)$ in the hydrodynamic limit. A rigorous investigation of this phenomenon is left for future research.\\
In the final part of the paper, we investigate a univariate random motion with time-dependent velocity. In particular, we study a stochastic process $\{X(t)\}_{t\ge0}$ moving with velocity $$\mathtt{v}(t)=c\,\sigma(t)$$ where $c$ is a stricly positive constant and $\sigma:[0,+\infty)\to\mathbb{R}^+$ is a positive-valued function of time. As usual, the process is assumed to change direction at Poisson times with intensity $\lambda$. The probability density function of $X(t)$ is a solution to the partial differential equation
\begin{equation}\frac{\partial}{\partial t}\left[\frac{1}{\sigma(t)}\,\frac{\partial f}{\partial t}\right]+\frac{2\lambda}{\sigma(t)}\,\frac{\partial f}{\partial t}=c^2\sigma(t)\,\frac{\partial^2 f}{\partial x^2}.\label{intro:sigmapde}\end{equation} As discussed by Masoliver and Weiss \cite{masoliverweiss} and Garra and Orsingher \cite{garraorsingher}, finding an explicit solution to equation (\ref{intro:sigmapde}) is challenging. However we show that the following representation holds for $X(t)$
\begin{equation}X(t)=\int_0^t\sigma(u)\,d\mathcal{T}(u)\label{intro:intdT}\end{equation} where $\{\mathcal{T}(t)\}_{t\ge0}$ is a standard telegraph process. In view of the representation (\ref{intro:intdT}), we are able to find the covariance structure of $X(t)$, which reads
\begin{equation}\label{intro:autocovbeforelimit}\mathbb{E}\left[X(s)\,X(t)\right]=c^2\int_0^t\int_0^s e^{-2\lambda\lvert x-y\lvert}\sigma(x)\sigma(y)\mathop{dx}\mathop{dy}.\end{equation}
We study the hydrodynamic limit of $X(t)$ for $\lambda,c\to+\infty,\;\frac{\lambda}{c^2}\to1$, and we show that
\begin{equation*}\lim_{\lambda,c\to+\infty}\mathbb{P}\big(X(t)\in dx\big)/dx=\frac{1}{\sqrt{2\pi \int_0^t\sigma^2(u)\mathop{du}}}\;\exp\left(-\frac{x^2}{2\int_0^t\sigma^2(u)\mathop{du}}\right)\end{equation*} which implies that
\begin{equation}\lim_{\lambda,c\to+\infty} X(t)\overset{i.d.}{=}\int_0^t\sigma(u)\,dB(u).\label{intro:itolimit}\end{equation} Observe that formula (\ref{intro:itolimit}) is consistent with both (\ref{intro:intdT}) and (\ref{intro:autocovbeforelimit}). Indeed, we prove that, under suitable regularity assumptions on $\sigma(\cdot)$, the hydrodynamic limit of formula (\ref{intro:autocovbeforelimit}) coincides with the covariance function of the It\^{o} integral (\ref{intro:itolimit}).

\section{Random motions with space-varying velocity}\label{prelimsec}
\noindent Consider a stochastic process $\{X(t)\}_{t\ge0}$ whose velocity at time $t$ depends on the current position $X(t)$. In particular, for a given positive-valued function $\mathtt{v}:\mathbb{R}\to\mathbb{R}^+$, we assume that the process $X(t)$ can move rightwards with velocity $\mathtt{v}(X(t))$ or leftwards with velocity $-\mathtt{v}\left(X(t)\right)$. At the initial time $t=0$, the process lies at a fixed point $x_0$ and starts moving leftwards or rightwards with probability $\frac{1}{2}$. The process then switches direction at Poisson times, changing the sign of its velocity. Denote by $N(t)$ the Poisson process which governs the direction changes and by $\lambda$ its intensity. Moreover, we denote by $D(t)$ the direction of $X(t)$. In particular, we say that $D(t)=d_0$ when $X(t)$ is moving rightwards, while $D(t)=d_1$ when $X(t)$ is moving leftwards.\\
Throughout the paper, we assume that the velocity function $\mathtt{v}(\cdot)$ is continuously differentiable. Under this assumption, it is clear that the paths of the process $X(t)$ are differentiable almost everywhere. Indeed, when the process is moving rightwards, the position $X(t)$ of the process satisfies, by construction, the ordinary differential equation $\frac{dX(t)}{dt}=\mathtt{v}(X(t))$. Similarly, if $X(t)$ is moving leftwards it must satisfy the equation $\frac{dX(t)}{dt}=-\mathtt{v}(X(t))$. Note that the solutions to these equations are unique under the assumption that $\mathtt{v}(\cdot)$ is continuously differentiable (we refer to the book by Perko \cite{perko} for details). Therefore, the only points at which the paths of $X(t)$ are non-differentiable are the instants at which a change of direction occurs. However, this set has Lebesgue measure zero since it is countable. For this reason, in the remainder of the paper, we will make use of the derivative $\frac{dX(t)}{dt}$.\\
\noindent We also emphasize that the support of the process $X(t)$ is time-varying, and we denote it by $R_t$. The explicit representation of the set $R_t$ depends on the choice of the velocity $\mathtt{v}(\cdot)$.\\
\noindent In order to study the distribution of $X(t)$, we define, for $j=0,1$, the cumulative distribution functions
\begin{equation*}F_j(x,t)=\mathbb{P}\big(X(t)\le x,\,D(t)=d_j\big),\qquad x\in R_t,\,t>0.\end{equation*} Observe that the distribution function of $X(t)$
\begin{equation*}F(x,t)=\mathbb{P}\big(X(t)\le x\big),\qquad x\in R_t,\,t>0.\end{equation*} satisfies the relationship $F(x,t)=F_0(x,t)+F_1(x,t).$\\
\noindent By using standard techniques for the analysis of finite-velocity random motions it can be verified that, if the velocity $\mathtt{v}(\cdot)$ is sufficiently smooth, the following system of partial differential equations is satisfied
\begin{equation}
\begin{dcases}
\frac{\partial F_0}{\partial t}=-\mathtt{v}(x)\frac{\partial F_0}{\partial x}+\lambda F_1-\lambda F_0\\
\frac{\partial F_1}{\partial t}=\mathtt{v}(x)\frac{\partial F_1}{\partial x}+\lambda F_0-\lambda F_1.
\end{dcases}\label{Fsyscdf}
\end{equation}
For instance, the first equation of the system (\ref{Fsyscdf}) can be obtained by observing that
\begin{align}F_0(x,t+\Delta t)=F_0(x-\mathtt{v}(x)\Delta t,t)\,(1-\lambda \Delta t)+F_1(x,t)\lambda \Delta t + o(\Delta t).\label{toexpand}\end{align}
Taking a first-order Taylor expansion of equation (\ref{toexpand}), dividing by $\Delta t$ and taking the limit for $\Delta t\to0$ yields the desired equation. Similar arguments lead to the second equation of the system (\ref{Fsyscdf}). By defining now, for $j=0,1$, the probability density functions
\begin{equation*}f_j(x,t)=\mathbb{P}\big(X(t)\in\mathop{dx},\,D(t)=d_j\big)/\mathop{dx},\qquad x\in R_t,\,t>0\end{equation*}
and taking the derivatives with respect to $x$ of the equations in (\ref{Fsyscdf}), we obtain the system of partial differential equations
\begin{equation}
\begin{dcases}
\frac{\partial f_0}{\partial t}=-\frac{\partial}{\partial x}\big[\mathtt{v}(x)\,f_0\big]+\lambda f_1-\lambda f_0\\
\frac{\partial f_1}{\partial t}=\frac{\partial}{\partial x}\big[\mathtt{v}(x)\,f_1\big]+\lambda f_0-\lambda f_1.
\end{dcases}\label{fsyspdf}
\end{equation}
\noindent We now make some remarks. We observe that the system (\ref{fsyspdf}) is slightly different from that obtained in the papers by Masoliver and Weiss \cite{masoliverweiss} and Garra and Orsingher \cite{garraorsingher}. In particular, the velocity term $\mathtt{v}(x)$ and the differential operator $\frac{\partial}{\partial x}$ are interchanged. The aforementioned authors obtained, for the densities $f_0$ and $f_1$, a system of the form (\ref{Fsyscdf}), which we claim to be the system governing the cumulative distribution functions instead. Moreover, denoting by $$f(x,t)=\mathbb{P}\left(X(t)\in\,dx\right)/dx$$ the probability density function of $X(t)$ and taking into account that $f(x,t)=f_0(x,t)+f_1(x,t)$ the system (\ref{fsyspdf}) implies that the following partial differential equation is satisfied:
\begin{equation}
\frac{\partial^2 f}{\partial t^2}+2\lambda \frac{\partial f}{\partial t}=\frac{\partial}{\partial x} \left\{\mathtt{v}(x)\frac{\partial}{\partial x}\big[\mathtt{v}(x)\,f\big]\right\}.\label{fpderob}
\end{equation}
Again, equation (\ref{fpderob}) differs from that obtained by previous authors because the velocity term and the differential operator with respect to $x$ are interchanged. To confirm the correctness of equation (\ref{fpderob}), we use the fact that the process $X(t)$ can be viewed as a nonlinear transformation of a standard telegraph process, as shown by Masoliver and Weiss \cite{masoliverweiss} and Garra and Orsingher \cite{garraorsingher}. In particular, denoting by $\{\mathcal{T}(t)\}_{t\ge0}$ a standard telegraph process with constant velocity $c>0$, the following representation holds for $\mathcal{T}(t)$:
\begin{equation}\mathcal{T}(t)=c\int_{x_0}^{X(t)}\frac{1}{\mathtt{v}(w)}\,dw\label{TXrelat}\end{equation} provided that the integral in the right-hand side of the equation above is well-defined. To verify that the process $\mathcal{T}(t)$ defined by equation (\ref{TXrelat}) has constant velocity, it is sufficient to verify that $$\frac{d\mathcal{T}(t)}{dt}=c\,\frac{d X(t)}{dt}\,\frac{1}{\mathtt{v}(X(t))}=\pm c$$ where the sign depends on whether $D(t)=d_0$ or $D(t)=d_1$. Denote by $$P(z,t)=\mathbb{P}\left(\mathcal{T}(t)\le z\right),\qquad z\in[-ct,ct]$$ the cumulative distribution function of $\mathcal{T}(t)$ and by $p(z,t)$ the corresponding probability density function 
$$p(z,t)=\mathbb{P}\left(\mathcal{T}(t)\in dz\right)/\mathop{dz},\qquad z\in(-ct,ct).$$
Moreover, observe that, since $\mathtt{v}(\cdot)$ is positive-valued, the mapping $x\mapsto c\int_{x_0}^x\frac{1}{\mathtt{v}(w)}\,dw$ is strictly monotonically increasing and hence invertible. This implies that, in view of the relationship (\ref{TXrelat}) and setting $z=c\int_{x_0}^x\frac{1}{\mathtt{v}(w)}\,dw$, we can write \begin{equation*}F(x,t)=P(z,t)\end{equation*}
which in turn implies that \begin{equation*}\mathtt{v}(x)\, f(x,t)=c\;p(z,t).\end{equation*} It is now clear that, since $p(z,t)$ satisfies the partial differential equation
\begin{equation*}
\frac{\partial^2 p}{\partial t^2}+2\lambda \frac{\partial p}{\partial t}=c^2\,\frac{\partial^2 p}{\partial z^2}
\end{equation*}
the probability density function $f(x,t)$ must satisfy equation (\ref{fpderob}).\\

Up to this point, we have considered only the case in which the velocity $\mathtt{v}(\cdot)$ takes positive values in the support of the process $X(t)$. It is now interesting to observe that formula (\ref{TXrelat}) provides some insight into the behaviour of the process when the assumption of the velocity $\mathtt{v}(\cdot)$ being strictly positive is relaxed. Under the assumption that the velocity vanishes at some fixed point $b\in\mathbb{R}$, that is $\mathtt{v}(b)=0$, the question of whether the point $x=b$ is reachable by $X(t)$ naturally arises, since the velocity $\mathtt{v}(X(t))$ tends to zero as $X(t)\to b$. In view of formula (\ref{TXrelat}), the answer to this question depends on the rate at which the velocity vanishes in proximity of the point $b$. For instance, if the velocity vanishes at linear rate, that is if $$\lim_{x\to b}\frac{\mathtt{v}(x)}{\lvert x-b\lvert}=L$$ for some $L>0$, the point $x=b$ acts as an unreachable barrier, since it can only be reached in an infinite time. A notable case of this behaviour is that of linear velocity $\mathtt{v}(x)=c\,x$, with $c>0$. In this case, the point $x=0$ is unreachable and, if the starting point $x_0$ is strictly positive, the support of the process $X(t)$ is given by the positive half-line. This conclusion is clearly consistent with the representation \begin{equation}\label{intro:fvgbm}X(t)=x_0\,e^{\mathcal{T}(t)}\end{equation} which is obtained by formula (\ref{TXrelat}). Recalling that, in the hydrodynamic limit for $\lambda,c\to+\infty$ with $\frac{\lambda}{c^2}\to1$, the telegraph process $\mathcal{T}(t)$ converges in distribution to a standard Brownian motion, the process (\ref{intro:fvgbm}) can be regarded as a geometric telegraph process. This process has been investigated by Orsingher and De Gregorio \cite{degregorio} as the $y$-component of a hyperbolic Brownian motion on the Poincaré half-plane. The authors obtained the general expression of the moments of arbitrary order by using the characteristic function of $\mathcal{T}(t)$. The geometric telegraph process has also been investigated by Di Crescenzo and Pellerey \cite{dicrescenzo}, who discussed some financial applications related to option pricing. Moreover, the authors studied a generalization of the process in which the direction changes occur with Erlang-distributed intertimes.\\
\noindent On the other hand, if one assumes that $\mathtt{v}(\cdot)$ tends to zero with sublinear rate, that is if $$\lim_{x\to b}\frac{\mathtt{v}(x)}{\lvert x-b\lvert^\alpha}=L$$ for some $\alpha\in(0,1),\;L>0$, it can be verified that the point $b$ is reachable in finite time. This occurs because, since $\alpha<1$, the velocity decays slowly as $X(t)$ approaches $b$, and thus it does not prevent the process from reaching this point. In particular, assuming that the process $X(t)$ starts moving towards $b$ at time $t=0$ and does not change direction, the time $t^*$ required to reach the point $b$ is finite and reads
\begin{equation}t^*=\left\lvert\int_{x_0}^b\frac{1}{\mathtt{v}(w)}\,dw\right\lvert.\label{t*}\end{equation} In this case, finding the general distribution of $X(t)$ is beyond the aim of the present paper. We emphasize, however, that the relationship (\ref{toexpand}) and the related partial differential equation (\ref{fpderob}) can be proved to hold only for $t<t^*$. In other words, if the point $b$ is reachable in finite time, equation (\ref{fpderob}) fully characterizes the process only up to the time $t^*$. In fact, when the point $b$ is reached, a specific assumption on the behaviour of $X(t)$ is necessary in order for the process to be well-defined. For instance, one can assume that $b$ is an absorbing barrier or a reflecting barrier. Of course, these are not the only possibilities since alternative boundary conditions or additional stochastic mechanisms may also be introduced. Subsequently, the distribution of $X(t)$ for $t>t^*$ depends on the chosen assumption. We clarify this point with an example. Assume that the process $X(t)$ has space-varying velocity $$\mathtt{v}(x)=c\,x^{\alpha},\qquad \alpha\in(0,1),\;c>0.$$ and initial state $X(0)=x_0$, with $x_0>0$. The velocity function vanishes at zero with sublinear rate, implying that the point $b=0$ can be reached in a finite time $t^*$ given by $$t^*=\frac{x_0^{1-\alpha}}{c\,(1-\alpha)}.$$ For $t<t^*$, the time-dependent support $R_t$ of $X(t)$ reads $$R_t=\Big[\left(x_0^{1-\alpha}-c(1-\alpha)t\right)^{\frac{1}{1-\alpha}},\;\left(x_0^{1-\alpha}+c(1-\alpha)t\right)^{\frac{1}{1-\alpha}}\Big].$$ The distribution of $X(t)$ has a continuous component in the interior of $R_t$, which will henceforth be denoted as $\text{Int}(R_t)$. The extrema of the interval $R_t$ are reached with positive probability when no changes of direction occur. In view of formula (\ref{TXrelat}) and denoting by $\mathcal{T}(t)$ a standard telegraph process with velocity $c$, the process $X(t)$ admits the representation
\begin{equation}X(t)=\big[x_0^{1-\alpha}+(1-\alpha)\mathcal{T}(t)\big]^{\frac{1}{1-\alpha}},\qquad t<t^*.\label{alpharepr}\end{equation} Formula (\ref{alpharepr}) permits us to find the exact distribution of $X(t)$ for $t<t^*$. In particular, by recalling the probability density function of the telegraph process
\begin{align*}\mathbb{P}\left(\mathcal{T}(t)\in\mathop{dz}\right)/\mathop{dz}=\frac{e^{-\lambda t}}{2c}\Bigg[\lambda&\,I_0\left(\frac{\lambda}{c}\sqrt{c^2t^2-z^2}\right)\\&\qquad+\frac{\partial}{\partial t}I_0\left(\frac{\lambda}{c}\sqrt{c^2t^2-z^2}\right)\Bigg],\;\; \lvert z\lvert <t,\;t>0.\end{align*} it immediately follows that
\begin{align}\mathbb{P}\big(&X(t)\in\mathop{dx}\big)/\mathop{dx}=\frac{e^{-\lambda t}}{2cx^\alpha}\Bigg[\lambda\,I_0\left(\frac{\lambda}{c\,(1-\alpha)}\sqrt{c^2t^2(1-\alpha)^2-(x^{1-\alpha}-x_0^{1-\alpha})^2}\right)\nonumber\\
&+\frac{\partial}{\partial t}I_0\left(\frac{\lambda}{c\,(1-\alpha)}\sqrt{c^2t^2(1-\alpha)^2-(x^{1-\alpha}-x_0^{1-\alpha})^2}\right)\Bigg],\;\; x\in\text{Int}(R_t),\; t<t^*.\nonumber\end{align} We now discuss what happens for $t\ge t^*$. In principle, many different assumptions can be adopted on the behaviour of $X(t)$ for $t\ge t^*$. For instance, one can assume that the point $b=0$ acts as a reflecting barrier, in which case the process $X(t)$ can be expressed as
\begin{equation}X(t)=\big\lvert x_0^{1-\alpha}+(1-\alpha)\mathcal{T}(t)\big\lvert^{\frac{1}{1-\alpha}},\qquad t>0.\label{alphareprreflect}\end{equation}
Alternatively, it can be assumed that $b=0$ is an absorbing barrier. In this case, we can define the absorption time $\tau_0$ as $$\tau_0=\inf\left\{t>0:\;\mathcal{T}(t)\le -\,\frac{x_0^{1-\alpha}}{1-\alpha}\right\}$$ and write 
\begin{equation}X(t)=\big[ x_0^{1-\alpha}+(1-\alpha)\,\mathcal{T}(t \wedge\tau_0)\big]^{\frac{1}{1-\alpha}},\qquad t>0.\label{alphareprabsorb}\end{equation}
Clearly, in the cases (\ref{alphareprreflect}) and (\ref{alphareprabsorb}), finding the explicit distribution of $X(t)$ is not trivial for $t>t^*$. In particular, in the case (\ref{alphareprabsorb}), the joint distribution of $(\mathcal{T}(t),\,\tau_0)$ would be needed, which is very difficult to find (see Cinque and Orsingher \cite{cinque}). However, it is clear that both the representations (\ref{alphareprreflect}) and (\ref{alphareprabsorb}) coincide with (\ref{alpharepr}) when $t<t^*$, in which case equation (\ref{fpderob}) fully characterizes the distribution of $X(t)$.\\
\noindent Up to now, we have considered velocity functions $\mathtt{v}(\cdot)$ that vanish at a single point. In principle, it is possible to assume velocity functions that vanish at several points. As we will see, this yields finite-velocity processes which are naturally confined within bounded regions.

\section{On a bounded process with nonlinear velocity}
\noindent In this section, we study a univariate process $\{X(t)\}_{t\ge0}$ with initial value $x_0\in(0,1)$ and space-varying velocity \begin{equation}\label{x(1-x)}\mathtt{v}(x)=c\,x(1-x),\qquad x\in(0,1)\end{equation} with $c>0$. Clearly, the velocity (\ref{x(1-x)}) vanishes at the points $x=0$ and $x=1$. This naturally raises the question of whether a particle with position $X(t)$ can reach the points where the velocity vanishes. In view of the discussion of section \ref{prelimsec}, it turns out that both points $x=0$ and $x=1$ act as reflecting barriers and cannot be reached by the process, as the velocity $\mathtt{v}(\cdot)$ vanishes with linear rate at these points. A further verification of this fact can be achieved by simply observing that, in view of formula (\ref{TXrelat}), the process $X(t)$ admits the following representation:
\begin{equation}\label{TXx1-x}X(t)=\frac{x_0\,e^{\mathcal{T}(t)}}{1-x_0\,\left(1-e^{\mathcal{T}(t)}\right)}\end{equation}
where $\mathcal{T}(t)$ is a standard symmetric telegraph process with velocity $c$. Thus, the support of the process is given by the set
\begin{equation}\label{x(1-x)support}R_t=\left\{x\in\mathbb{R}:\;\frac{x_0\,e^{-ct}}{1-x_0\,\left(1-e^{-ct}\right)}\le x\le\frac{x_0\,e^{ct}}{1-x_0\,\left(1-e^{ct}\right)}\right\}\end{equation} where the extrema of the interval can be reached if and only if the process $X(t)$ performs no changes of direction until time $t$. In order to study the continuous component of the distribution of $X(t)$ in the interior of the support (\ref{x(1-x)support}), we employ the relationship (\ref{TXx1-x}) which implies that, for $t>0$ and $x\in\left(\frac{x_0\,e^{-ct}}{1-x_0\,\left(1-e^{-ct}\right)},\;\frac{x_0\,e^{ct}}{1-x_0\,\left(1-e^{ct}\right)}\right)$, the probability density function $f$ of $X(t)$ can be expressed explicitly in the form
\begin{align}f(x,t)=&\frac{e^{-\lambda t}}{2c\,x(1-x)}\Bigg\{\lambda\, I_0\left(\frac{\lambda}{c}\,\sqrt{c^2t^2-\log^2\left(\frac{x\,(1-x_0)}{x_0(1-x)}\right)}\right)\nonumber\\&\qquad+I_0\left(\frac{\lambda}{c}\,\sqrt{c^2t^2-\log^2\left(\frac{x\,(1-x_0)}{x_0(1-x)}\right)}\right)\Bigg\}.\nonumber\end{align}
We are now interested in studying the moments of the process $X(t)$. Clearly, due to the nonlinearity of the relationship between $X(t)$ and $\mathcal{T}(t)$, finding an explicit expression for the moments is, in general, a difficult task. In the special case $x_0=\frac{1}{2}$, the symmetry of the distribution can be exploited to obtain the following result.
\begin{thm}For $t>0$, it holds that $$\mathbb{E}\left[\frac{e^{\mathcal{T}(t)}}{1+e^{\mathcal{T}(t)}}\right]=\frac{1}{2}.$$\label{thm:symthm}\end{thm}
\begin{proof}We have that \begin{align}\mathbb{E}\left[\frac{e^{\mathcal{T}(t)}}{1+e^{\mathcal{T}(t)}}\right]=&\mathbb{E}\left[\frac{e^{\mathcal{T}(t)}}{1+e^{\mathcal{T}(t)}}\;\Big\lvert\; \mathcal{T}(t)>0\right]\cdot \mathbb{P}\left(\mathcal{T}(t)>0\right)\nonumber\\
&\qquad\qquad+\mathbb{E}\left[\frac{e^{\mathcal{T}(t)}}{1+e^{\mathcal{T}(t)}}\;\Big\lvert\; \mathcal{T}(t)<0\right]\cdot \mathbb{P}\left(\mathcal{T}(t)<0\right)\nonumber\\=&\frac{1}{2}\,\mathbb{E}\left[\frac{e^{\mathcal{T}(t)}}{1+e^{\mathcal{T}(t)}}\;\Big\lvert\; \mathcal{T}(t)>0\right]+\frac{1}{2}\,\mathbb{E}\left[\frac{e^{\mathcal{T}(t)}}{1+e^{\mathcal{T}(t)}}\;\Big\lvert\; \mathcal{T}(t)<0\right].\nonumber\end{align}We now observe that, since $\mathcal{T}(t)$ is symmetric, it is identical in distribution to $-\mathcal{T}(t)$. Therefore, we can write
\begin{align}\mathbb{E}\left[\frac{e^{\mathcal{T}(t)}}{1+e^{\mathcal{T}(t)}}\right]=&\frac{1}{2}\,\mathbb{E}\left[\frac{e^{\mathcal{T}(t)}}{1+e^{\mathcal{T}(t)}}\;\Big\lvert\; \mathcal{T}(t)>0\right]+\frac{1}{2}\,\mathbb{E}\left[\frac{e^{-\mathcal{T}(t)}}{1+e^{-\mathcal{T}(t)}}\;\Big\lvert\; \mathcal{T}(t)>0\right]\nonumber\\
=&\frac{1}{2}\,\mathbb{E}\left[1\;\Big\lvert\; \mathcal{T}(t)>0\right]=\frac{1}{2}\nonumber\end{align} which completes the proof.
\end{proof}
To extend the result to the general case $x_0\neq\frac{1}{2}$, we seek to express the process $X(t)$ as a power series in terms of $\mathcal{T}(t)$. To this end, we give the following preliminary result.
\begin{thm}\label{thm:poly}
The following Taylor series expansion holds for $a,\theta>0$:
\begin{equation}\left(\frac{\theta+1}{\theta e^t+1}\right)^ae^{xt}=\sum_{n=0}^{+\infty}\frac{E_n^{(a,\theta)}(x)}{n!}\,t^n,\qquad t,x\in\mathbb{R}\label{taylorseriesstatement}\end{equation}
where
\begin{equation}E_n^{(a,\theta)}(x)=\sum_{k=0}^n\binom{n}{k}x^{n-k}\sum_{j=0}^k\binom{-a}{j} j!\;\left\{k \atop j\right\}\left(\frac{\theta}{\theta+1}\right)^j\label{Eulerpolydef}\end{equation}
and the coefficients $\left\{k \atop j\right\},\; j=0,...,k,$ represent the Stirling numbers of the second kind.
\end{thm}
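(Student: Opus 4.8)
The plan is to peel off the factor $e^{xt}$ and expand the remaining factor on its own. Writing $g(t)=\left(\frac{\theta+1}{\theta e^t+1}\right)^a$, the crucial structural remark is that the explicit formula \eqref{Eulerpolydef} has the binomial-convolution shape $E_n^{(a,\theta)}(x)=\sum_{k=0}^n\binom{n}{k}c_k\,x^{n-k}$, where $c_k=E_k^{(a,\theta)}(0)$ is exactly the inner sum of \eqref{Eulerpolydef} and depends only on $a$ and $\theta$. This is precisely the pattern generated by multiplying two exponential generating functions. Hence, once I establish that $g(t)=\sum_{k\ge0}\frac{c_k}{k!}t^k$ with these $c_k$, the full statement will follow from the Cauchy product of $g(t)$ with $e^{xt}=\sum_{m\ge0}\frac{x^m}{m!}t^m$.

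To expand $g(t)$ I would first factor $\theta e^t+1=(\theta+1)\bigl(1+\tfrac{\theta}{\theta+1}(e^t-1)\bigr)$, so that $g(t)=\bigl(1+\tfrac{\theta}{\theta+1}(e^t-1)\bigr)^{-a}$, with the factors $(\theta+1)^{\pm a}$ cancelling. Applying the generalized binomial theorem with $u=\tfrac{\theta}{\theta+1}(e^t-1)$ gives $g(t)=\sum_{j\ge0}\binom{-a}{j}\bigl(\tfrac{\theta}{\theta+1}\bigr)^j(e^t-1)^j$. I would then substitute the standard exponential generating function of the Stirling numbers of the second kind, namely $(e^t-1)^j=j!\sum_{k\ge j}\left\{k\atop j\right\}\frac{t^k}{k!}$, and reorganize by powers of $t$. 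Collecting the coefficient of $\frac{t^k}{k!}$ yields exactly $c_k=\sum_{j=0}^k\binom{-a}{j}\,j!\left\{k\atop j\right\}\bigl(\tfrac{\theta}{\theta+1}\bigr)^j$, the upper limit $k$ arising because $\left\{k\atop j\right\}=0$ whenever $j>k$.

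The final step is routine bookkeeping: multiplying $g(t)=\sum_{k\ge0}\frac{c_k}{k!}t^k$ by $e^{xt}=\sum_{m\ge0}\frac{x^m}{m!}t^m$ and reading off the coefficient of $\frac{t^n}{n!}$ produces $\sum_{k=0}^n\binom{n}{k}c_k\,x^{n-k}$, which is precisely \eqref{Eulerpolydef}. This identifies the Taylor coefficients of $\left(\frac{\theta+1}{\theta e^t+1}\right)^a e^{xt}$ with $E_n^{(a,\theta)}(x)/n!$ and establishes \eqref{taylorseriesstatement}.

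I expect the only delicate point to be the justification of the double-summation rearrangement, but this is easily handled and is a matter of rigour rather than of ideas. The cleanest route is to work in the ring of formal power series in $t$: since $e^t-1$ has zero constant term, each $u^j$ is of order $t^j$, so the coefficient of any fixed $t^k$ receives contributions from only the finitely many indices $j=0,\dots,k$, and the interchange is immediate with no convergence hypothesis. If one prefers the analytic statement, it suffices to note that $\theta e^{0}+1=\theta+1>0$, so $g$ is analytic in a disc about the origin (its nearest singularities, where $\theta e^t=-1$, lie off the real axis at distance at least $\pi$), whence $g$ genuinely equals its Taylor series there; for $|t|$ small the binomial series converges absolutely, legitimizing the Fubini-type rearrangement. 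In either reading the resulting coefficient identity holds for all real $x$, since the $x$-dependence enters only polynomially through the entire factor $e^{xt}$.
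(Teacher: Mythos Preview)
Your proof is correct and takes a genuinely different route from the paper's. The paper computes the Taylor coefficients by direct differentiation: it applies the Leibniz rule to $h(t)=\left(\frac{\theta+1}{\theta e^t+1}\right)^a e^{xt}$, which reduces the problem to finding $\frac{d^k}{dt^k}(\theta e^t+1)^{-a}$, and then establishes the closed form for this derivative by induction on $k$, the inductive step resting on the Stirling recurrence $\left\{k\atop j\right\}=j\left\{k-1\atop j\right\}+\left\{k-1\atop j-1\right\}$. Your argument instead rewrites $g(t)=(1+u)^{-a}$ with $u=\frac{\theta}{\theta+1}(e^t-1)$, expands via the generalized binomial series, feeds in the exponential generating function $(e^t-1)^j=j!\sum_{k\ge j}\left\{k\atop j\right\}\frac{t^k}{k!}$, and finishes by Cauchy-multiplying with $e^{xt}$.

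What each approach buys: the paper's proof is entirely self-contained (it uses only the Stirling recurrence, not the Stirling EGF), at the cost of an inductive computation. Your approach is shorter and more structural, decomposing the identity into three off-the-shelf facts (binomial series, Stirling EGF, EGF product formula), and it makes transparent why the answer has the binomial-convolution shape $\sum_k\binom{n}{k}c_k x^{n-k}$. Your remarks on rigour (formal power series versus analyticity near $t=0$) are also a welcome addition, since the paper is silent on the domain of validity of the expansion.
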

\begin{proof}We set $$h(t)=\left(\frac{\theta+1}{\theta e^t+1}\right)^ae^{xt}$$ and we denote by $h^{(n)}(t)$ the $n$-th order derivative of $h(t)$ with respect to $t$. Since the Taylor series expansion of $h(t)$ reads $$h(t)=\sum_{n=0}^{+\infty}\frac{h^{(n)}(0)}{n!}\,t^n$$ it is clear that, in order to prove formula (\ref{taylorseriesstatement}), we need to show that \begin{equation}\label{coefficientstoprove}h^{(n)}(0)=E_n^{(a,\theta)}(x).\end{equation} We start by observing that
\begin{align}h^{(n)}(t)=&(\theta+1)^a\frac{d^n}{dt^n}\left[e^{xt}\;(\theta e^t+1)^{-a}\right]=(\theta+1)^a\sum_{k=0}^n\binom{n}{k}\frac{d^{n-k} e^{xt}}{dt^{n-k}}\;\frac{d^k}{dt^k}(\theta e^t+1)^{-a}\nonumber\\
=&(\theta+1)^a\,e^{xt}\sum_{k=0}^n\binom{n}{k} x^{n-k}\;\frac{d^k}{dt^k}(\theta e^t+1)^{-a}.\label{tempeuler}\end{align}
We now claim that \begin{equation}\frac{d^k}{dt^k}(\theta e^t+1)^{-a}=\frac{1}{\left(\theta e^t+1\right)^a}\sum_{j=0}^k\binom{-a}{j}j!\;\left\{k \atop j\right\}\,\left(\frac{\theta e^{t}}{\theta e^t+1}\right)^{j}.\label{toprovebyinduction}\end{equation} Formula (\ref{toprovebyinduction}) is trivial for $k=0$ and can be proved by induction for $k\ge1$. Indeed, the induction principle yields 
\begin{align}\frac{d^k}{dt^k}(\theta& e^t+1)^{-a}=\frac{d}{dt}\left[\frac{d^{k-1}}{dt^{k-1}}(\theta e^t+1)^{-a}\right]\nonumber\\
=&\sum_{j=0}^{k-1}\binom{-a}{j} j!\;\left\{k-1 \atop j\right\}\,\theta^j\,\frac{d}{dt}\left[e^{jt}\,(\theta e^t+1)^{-j-a}\right]\nonumber\\
=&\frac{1}{\left(\theta e^t+1\right)^a}\sum_{j=1}^{k-1}\binom{-a}{j}j!\;j\left\{k-1 \atop j\right\}\,\left(\frac{\theta e^{t}}{\theta e^t+1}\right)^{j}\nonumber\\&\qquad+\frac{1}{\left(\theta e^t+1\right)^a}\sum_{j=0}^{k-1}\binom{-a}{j}(-j-a)\, j!\;\left\{k-1 \atop j\right\}\,\left(\frac{\theta e^{t}}{\theta e^t+1}\right)^{j+1}\nonumber\\
=&\frac{1}{\left(\theta e^t+1\right)^a}\sum_{j=1}^{k-1}\binom{-a}{j}j!\;j\left\{k-1 \atop j\right\}\,\left(\frac{\theta e^{t}}{\theta e^t+1}\right)^{j}\nonumber\\&\qquad+\frac{1}{\left(\theta e^t+1\right)^a}\sum_{j=0}^{k-1}\binom{-a}{j+1}\, (j+1)!\;\left\{k-1 \atop j\right\}\,\left(\frac{\theta e^{t}}{\theta e^t+1}\right)^{j+1}\nonumber\\
=&\binom{-a}{k}\frac{k!}{\left(\theta e^t+1\right)^a}\left(\frac{\theta e^{t}}{\theta e^t+1}\right)^{k}\nonumber\\
&\qquad+\frac{1}{\left(\theta e^t+1\right)^a}\sum_{j=1}^{k-1}\binom{-a}{j}j!\;\left(j\left\{k-1 \atop j\right\}+\left\{k-1 \atop j-1\right\}\right)\,\left(\frac{\theta e^{t}}{\theta e^t+1}\right)^{j}.\label{tempinduction}
\end{align}
By using the recursive formula $$\left\{k \atop j\right\}=j\left\{k-1 \atop j\right\}+\left\{k-1 \atop j-1\right\},\qquad0<j<k$$ and taking into account that $\left\{k \atop 0\right\}=0$ and $\left\{k \atop k\right\}=1$, formula (\ref{tempinduction}) immediately proves equation (\ref{toprovebyinduction}). By substituting now formula (\ref{toprovebyinduction}) into (\ref{tempeuler}), we obtain \begin{equation}h^{(n)}(t)=\left(\frac{\theta+1}{\theta e^t+1}\right)^ae^{xt}\sum_{k=0}^n\binom{n}{k} x^{n-k}\sum_{j=0}^k\binom{-a}{j}j!\;\left\{k \atop j\right\}\,\left(\frac{\theta e^{t}}{\theta e^t+1}\right)^{j}.\label{h(t)derivative}\end{equation}
 Setting $t=0$ in equation (\ref{h(t)derivative}) finally yields formula (\ref{coefficientstoprove}) which completes the proof of (\ref{taylorseriesstatement}).
\end{proof}
The polynomials $E_n^{(a,\theta)}(x)$ introduced in theorem \ref{thm:poly} provide an interesting generalization of the well-known Euler polynomials $E_n(x),\, n\in\mathbb{N}$, (see Gradshteyn and Ryzhik \cite{gradshteyn}), which naturally emerge from the generating function 
$$\frac{2e^{xt}}{e^t+1}=\sum_{n=0}^{+\infty}\frac{E_n(x)}{n!}\,t^n.$$ We emphasize that an extension of the Euler polynomials involving the parameter $a$ has already been studied in the literature (see Roman \cite{roman}). In theorem \ref{thm:poly}, we further generalized this family of polynomials by introducing the additional parameter $\theta$. Similarly to the classical Euler polynomials, the polynomials $E_n^{(a,\theta)}(x)$ have some interesting properties from the point of view of umbral calculus. Indeed, they form an Appell sequence, meaning that they satisfy the relationship
$$\frac{d}{dx}E_n^{(a,\theta)}(x)=n\,E_{n-1}^{(a,\theta)}(x).$$
From our perspective, however, these polynomials provide an explicit expression for the moments of $X(t)$. We first observe that the process $X(t)$ can be represented as
$$X(t)=x_0\sum_{n=0}^{+\infty}\frac{E_n^{\left(1,\frac{x_0}{1-x_0}\right)}(1)}{n!}\;\mathcal{T}(t)^n$$
and, in general, for $a>0$,
\begin{equation}X(t)^a=x_0^a\sum_{n=0}^{+\infty}\frac{E_n^{\left(a,\frac{x_0}{1-x_0}\right)}(a)}{n!}\;\mathcal{T}(t)^n\label{Xpowerseriesa}\end{equation}
We then recall that the odd-order moments of the telegraph process $\mathcal{T}(t)$ vanish, while the even-order moments read (see Iacus and Yoshida \cite{iacus})
\begin{equation}\mathbb{E}\left[\mathcal{T}(t)^{2n}\right]=e^{-\lambda t}(ct)^{2n}\left(\frac{2}{\lambda t}\right)^{n-\frac{1}{2}}\Gamma\left(n+\frac{1}{2}\right)\left[I_{n+\frac{1}{2}}(\lambda t)+I_{n-\frac{1}{2}}(\lambda t)\right].\label{iacusmoments}\end{equation}
Hence, by combining formulas (\ref{Xpowerseriesa}) and (\ref{iacusmoments}), we obtain that
\begin{align}\mathbb{E}\left[X(t)^a\right]&=x_0^a\sum_{n=0}^{+\infty}\frac{E_{2n}^{\left(a,\frac{x_0}{1-x_0}\right)}(a)}{(2n)!}\;\mathbb{E}\left[\mathcal{T}(t)^{2n}\right]\nonumber\\
&=x_0^a\;\sqrt{\frac{\lambda t}{2}}\,e^{-\lambda t}\sum_{n=0}^{+\infty}\frac{\Gamma\left(n+\frac{1}{2}\right)\;E_{2n}^{\left(a,\frac{x_0}{1-x_0}\right)}(a)}{(2n)!}\cdot\nonumber\\&\qquad\qquad\qquad\qquad\qquad\qquad\cdot\left[I_{n+\frac{1}{2}}(\lambda t)+I_{n-\frac{1}{2}}(\lambda t)\right]\left(\frac{2c^2t}{\lambda}\right)^n.\label{Xpowerseriesmoments}\end{align}
We emphasize that, in formula (\ref{Xpowerseriesmoments}), the expected value and summation symbols were interchanged. This step can be justified by using the dominated convergence theorem. Moreover, we observe that formula (\ref{Xpowerseriesmoments}) is consistent with theorem \ref{thm:symthm}. In fact, by setting $a=1$ and $x_0=\frac{1}{2}$, the generalized Euler polynomials in formula (\ref{Xpowerseriesmoments}) reduce to the classical Euler polynomials. Hence, taking into account that $$E_{2n}(1)=\begin{cases}1\qquad \text{if}\;n=0\\0\qquad \text{if}\;n\ge0\end{cases}$$ and recalling that $$I_{\frac{1}{2}}(x)=\sqrt{\frac{2}{\pi x}\,}\;\sinh(x),\qquad\quad I_{-\frac{1}{2}}(x)=\sqrt{\frac{2}{\pi x}\,}\;\cosh(x)$$ formula (\ref{Xpowerseriesmoments}) clearly yields $\mathbb{E}\left[X(t)\right]=\frac{1}{2}.$\\
\noindent We are now interested in studying the hydrodynamic limit of the process $X(t)$ for $\lambda,c\to+\infty$, with $\frac{\lambda}{c^2}\to1$. To this aim, we first observe that, in view of equation (\ref{fpderob}), the probability density function $f$ of $X(t)$ is a solution to the partial differential equation
\begin{equation}
\frac{\partial^2 f}{\partial t^2}+2\lambda \frac{\partial f}{\partial t}=c^2 \frac{\partial}{\partial x} \left\{x(1-x)\frac{\partial}{\partial x}\big[x(1-x)\,f\big]\right\},\qquad x\in\text{Int}(R_t),\;t>0.\label{f_x(1-x)_pde}
\end{equation}
By dividing equation (\ref{f_x(1-x)_pde}) by $2\lambda$, and by taking the hydrodynamic limit, we obtain the diffusion equation
\begin{equation}
\frac{\partial f}{\partial t}=\frac{1}{2} \frac{\partial}{\partial x} \left\{x(1-x)\frac{\partial}{\partial x}\big[x(1-x)\,f\big]\right\},\qquad x\in(0,1),\;t>0.\label{x(1-x)_limit_pde}
\end{equation}
which coincides with the Fokker-Planck equation of the process satisfying the It\^{o} stochastic differential equation
\begin{equation}dX(t)=\frac{1}{2}\,X(t)(1-X(t))(1-2X(t))dt+X(t)(1-X(t))\,dB(t)\label{Xsedtren}\end{equation} where $\{B(t)\}_{t\ge0}$ is a standard Brownian motion. Hence, by solving the stochastic differential equation (\ref{Xsedtren}), it can be verified that \begin{equation}\label{BXx1-x}\lim_{\lambda,c\to+\infty}X(t)\overset{i.d.}{=}\frac{x_0\,e^{B(t)}}{1-x_0\,\left(1-e^{B(t)}\right)}.\end{equation}
Observe that, in view of the well-known limiting distribution of the standard telegraph process, formula (\ref{BXx1-x}) is consistent with (\ref{TXx1-x}). By taking now the hydrodynamic limit of formula (\ref{BXx1-x}), and recalling the asymptotic behaviour of the modified Bessel function for large argument $z\in\mathbb{R}^+$ $$I_\alpha(z)\sim\frac{e^z}{\sqrt{2\pi z}},\qquad \forall \alpha\in\mathbb{R}$$
we obtain that
\begin{equation*}\mathbb{E}\left[\left(\frac{x_0\,e^{B(t)}}{1-x_0\,\left(1-e^{B(t)}\right)}\right)^a\right]=\frac{x_0^a}{\sqrt{\pi}}\sum_{n=0}^{+\infty}\frac{\Gamma\left(n+\frac{1}{2}\right)\;E_{2n}^{\left(a,\frac{x_0}{1-x_0}\right)}(a)}{(2n)!}\;\left(2t\right)^n.\end{equation*}

\section{Multivariate extensions of motions with space-varying velocities}\label{sec:multivariate}
\noindent In this section, we study a planar random motion with space-varying velocity. We consider a bivariate stochastic process $(X(t),\,Y(t))$ which can move along four possible directions $d_j,\;j=0,1,2,3$. The directions are assumed to be orthogonal and are defined as
$$d_j=\left(\cos\left(\frac{\pi j}{2}\right),\;\sin\left(\frac{\pi j}{2}\right)\right)$$ where, of course, $d_j=d_{j+4n}$ for all integers $n$. At the initial time $t=0$, the process lies at the point $(x_0,y_0)$ and starts moving along one of the four possible directions, chosen randomly with equal probability $\frac{1}{4}$. As usual, the direction changes are governed by a Poisson process $N(t)$ with constant intensity $\lambda$. When a Poisson event occurs, the process $(X(t),\,Y(t))$ can turn either clockwise, from $d_j$ to $d_{j-1}$, or counterclockwise, from $d_j$ to $d_{j+1}$, each with probability $\frac{1}{2}$. The direction of $(X(t),\,Y(t))$ at time $t$ is denoted by $D(t)$.\\
We assume that the velocity of the process depends on its position along the direction of motion. Hence, when the process $(X(t),\,Y(t))$ is moving horizontally, its velocity depends on the abscissa $X(t)$. In particular, for a fixed positive-valued function $\mathtt{v}(\cdot)$, the velocity of the process is assumed to be $\mathtt{v}(X(t))$. Similarly, when the process is moving vertically, its velocity is given by $\mathtt{v}(Y(t))$. As usual, we assume that the function $\mathtt{v}(\cdot)$ is continuously differentiable. Moreover, we emphasize that the time-dependent support $R_t$ of the process depends on the functional form of the velocity.\\
We are interested in studying the probability density function
$$f(x,y,t)\mathop{dx}\mathop{dy}=\mathbb{P}\big(X(t)\in\mathop{dx},\;Y(t)\in\mathop{dy}\big), \qquad\quad(x,y)\in R_t,\; t>0.$$
\noindent For this purpose, we consider, for $j=0,1,2,3$, the auxiliary density functions
$$f_j(x,y,t)\mathop{dx}\mathop{dy}=\mathbb{P}\big(X(t)\in\mathop{dx},\;Y(t)\in\mathop{dy},\; D(t)=d_j\big), \qquad(x,y)\in R_t,\; t>0.$$
By using the methods described in section \ref{prelimsec}, it can be proved that the following linear system of partial differential equations holds
\begin{equation}
\begin{dcases}
\frac{\partial f_0}{\partial t}=-\frac{\partial}{\partial x}\big[\mathtt{v}(x)\,f_0\big]+\frac{\lambda}{2} f_1+\frac{\lambda}{2} f_3-\lambda f_0\\
\frac{\partial f_1}{\partial t}=-\frac{\partial}{\partial y}\big[\mathtt{v}(y)\,f_1\big]+\frac{\lambda}{2} f_0+\frac{\lambda}{2} f_2-\lambda f_1\\
\frac{\partial f_2}{\partial t}=\frac{\partial}{\partial x}\big[\mathtt{v}(x)\,f_2\big]+\frac{\lambda}{2} f_1+\frac{\lambda}{2} f_3-\lambda f_2\\
\frac{\partial f_3}{\partial t}=\frac{\partial}{\partial y}\big[\mathtt{v}(y)\,f_3\big]+\frac{\lambda}{2} f_0+\frac{\lambda}{2} f_2-\lambda f_3.
\end{dcases}\label{usualplanar}
\end{equation}
In order to study the distribution of $(X(t),\,Y(t))$, we define a stochastic process $(U(t),\,V(t))$ as \begin{equation}\label{UVdef}\begin{cases}U(t)=c\,\int_{x_0}^{X(t)}\frac{1}{\mathtt{v}(z)}\,dz\\[1.2ex] V(t)=c\,\int_{y_0}^{Y(t)}\frac{1}{\mathtt{v}(z)}\,dz\end{cases}\end{equation} provided that the integrals in formula (\ref{UVdef}) exist. The constant $c$ is assumed to be strictly positive. We denote by $S_t$ the support of $(U(t),\,V(t))$, and by $g$ its probability density function
$$g(u,v,t)\mathop{du}\mathop{dv}=\mathbb{P}\big(U(t)\in\mathop{du},\;V(t)\in\mathop{dv}\big), \qquad\quad(u,v)\in S_t,\; t>0.$$
Moreover, we denote by $g_j,\;j=0,1,2,3$, the density functions
$$g_j(u,v,t)\mathop{du}\mathop{dv}=\mathbb{P}\big(U(t)\in\mathop{du},\;V(t)\in\mathop{dv},\; D(t)=d_j\big), \qquad(u,v)\in S_t,\; t>0.$$
We now observe that the transformation (\ref{UVdef}) implies that, by setting $u=\int_{x_0}^x\frac{c}{\mathtt{v}(w)}\,dw$ and $v=\int_{y_0}^y\frac{c}{\mathtt{v}(w)}\,dw$, the following relationship holds for $j=0,1,2,3$
\begin{equation}c^2\;g_j(u,v,t)=f_j(x,y,t)\,\mathtt{v}(x)\,\mathtt{v}(y).\label{bivchange}\end{equation}
Substituting formula (\ref{bivchange}) into the system (\ref{usualplanar}) immediately yields that 
\begin{equation}
\begin{dcases}
\frac{\partial g_0}{\partial t}=-c\,\frac{\partial g_0}{\partial u}+\frac{\lambda}{2} g_1+\frac{\lambda}{2} g_3-\lambda g_0\\
\frac{\partial g_1}{\partial t}=-c\,\frac{\partial g_1}{\partial v}+\frac{\lambda}{2} g_0+\frac{\lambda}{2} g_2-\lambda g_1\\
\frac{\partial g_2}{\partial t}=c\,\frac{\partial g_2}{\partial u}+\frac{\lambda}{2} g_1+\frac{\lambda}{2} g_3-\lambda g_2\\
\frac{\partial g_3}{\partial t}=c\,\frac{\partial g_3}{\partial v}+\frac{\lambda}{2} g_0+\frac{\lambda}{2} g_2-\lambda g_3.
\end{dcases}\label{constantplanar}
\end{equation}
The system (\ref{constantplanar}) is well-known in the literature, as it describes the distribution of a planar random motion with orthogonal directions and constant velocity (see Orsingher and Marchione \cite{orsinghermarchione}). In other words, the process $(U(t),\,V(t))$ defined by the transformation (\ref{UVdef}) is a finite-velocity random motion which moves along the directions $d_j,\;j=0,1,2,3$, changes direction at times governed by the Poisson process $N(t)$ and travels at constant velocity $c$. The exact distribution of this process has been studied by several authors. The support of the process is given by the square \begin{equation}\label{supportgenerali}S_t=\{(u,v)\in\mathbb{R}^2:\;\;\lvert u+v\lvert\le ct,\;\lvert u-v\lvert\le ct\}\end{equation} and the continuous component of the distribution in the interior of the square reads
\begin{align}g(u,v,t)=\frac{e^{-\lambda t}}{2c^2}&\bigg[\frac{\lambda}{2}\, I_0\bigg(\frac{\lambda}{2c}\sqrt{c^2t^2-(u+v)^2}\bigg)+\frac{\partial}{\partial t}I_0\bigg(\frac{\lambda}{2c}\sqrt{c^2t^2-(u+v)^2}\bigg)\bigg]\nonumber\\
\cdot&\left[\frac{\lambda}{2}\, I_0\left(\frac{\lambda}{2c}\sqrt{c^2t^2-(u-v)^2}\right)+\frac{\partial}{\partial t}I_0\left(\frac{\lambda}{2c}\sqrt{c^2t^2-(u-v)^2}\right)\right].\label{pexactbiv}\end{align}
The distribution (\ref{pexactbiv}) was first obtained by Orsingher \cite{orsingher2000exact}, and it was proved to hold also in the case of a non-homogeneous Poisson process by Cinque and Orsingher \cite{cinquenonhom}. We now observe that, since formula (\ref{UVdef}) implies that the vector $(X(t),\,Y(t))$ can be expressed as a nonlinear transformation of $(U(t),\,V(t))$, we are now able to obtain the exact distribution of $(X(t),\,Y(t))$ in the interior of $R_t$.
\begin{thm}
The probability density function of the process $(X(t),\,Y(t))$ in the interior of $R_t$ reads
\begin{align}&f(x,y,t)=\frac{e^{-\lambda t}}{2\,\mathtt{v}(x)\,\mathtt{v}(y)}\cdot\Bigg[\frac{\lambda}{2}\, I_0\Bigg(\frac{\lambda}{2}\sqrt{t^2-\left(\int_{x_0}^x\frac{dz}{\mathtt{v}(z)}+\int_{y_0}^y\frac{dz}{\mathtt{v}(z)}\right)^2}\;\Bigg)\nonumber\\
&\qquad\qquad+\frac{\partial}{\partial t}I_0\Bigg(\frac{\lambda}{2}\sqrt{t^2-\left(\int_{x_0}^x\frac{dz}{\mathtt{v}(z)}+\int_{y_0}^y\frac{dz}{\mathtt{v}(z)}\right)^2}\;\Bigg)\Bigg]\nonumber\\
&\qquad\qquad\quad\cdot\Bigg[\frac{\lambda}{2}\, I_0\Bigg(\frac{\lambda}{2}\sqrt{t^2-\left(\int_{x_0}^x\frac{dz}{\mathtt{v}(z)}-\int_{y_0}^y\frac{dz}{\mathtt{v}(z)}\right)^2}\Bigg)\nonumber\\
&\qquad\qquad\qquad+\frac{\partial}{\partial t}I_0\Bigg(\frac{\lambda}{2}\sqrt{t^2-\left(\int_{x_0}^x\frac{dz}{\mathtt{v}(z)}-\int_{y_0}^y\frac{dz}{\mathtt{v}(z)}\right)^2}\Bigg)\Bigg],\; (x,y)\in \normalfont{\text{Int}}(R_t).\nonumber\end{align}\end{thm}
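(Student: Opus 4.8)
The plan is to obtain the density of $(X(t),Y(t))$ directly from the known density (\ref{pexactbiv}) of the constant-velocity process $(U(t),V(t))$ via the change-of-variables relationship already established in (\ref{UVdef}) and (\ref{bivchange}). The bivariate transformation is defined componentwise: $U(t)=c\int_{x_0}^{X(t)}\frac{dz}{\mathtt{v}(z)}$ depends only on $X(t)$, and $V(t)=c\int_{y_0}^{Y(t)}\frac{dz}{\mathtt{v}(z)}$ depends only on $Y(t)$. Since $\mathtt{v}(\cdot)$ is positive-valued, each of these maps is strictly increasing and therefore invertible, so the transformation $(X,Y)\mapsto(U,V)$ is a bijection on the relevant domains. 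The first step is to record the Jacobian of this map: because the transformation is diagonal (each new variable depends on a single old variable), the Jacobian factorizes as $\frac{\partial u}{\partial x}\cdot\frac{\partial v}{\partial y}=\frac{c}{\mathtt{v}(x)}\cdot\frac{c}{\mathtt{v}(y)}=\frac{c^2}{\mathtt{v}(x)\mathtt{v}(y)}$, which is exactly the factor $c^2/(\mathtt{v}(x)\mathtt{v}(y))$ that converts $g$ into $f$ in the density relationship $c^2 g(u,v,t)=f(x,y,t)\mathtt{v}(x)\mathtt{v}(y)$ of (\ref{bivchange}).

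Next I would substitute into (\ref{pexactbiv}) the explicit expressions $u=c\int_{x_0}^x\frac{dz}{\mathtt{v}(z)}$ and $v=c\int_{y_0}^y\frac{dz}{\mathtt{v}(z)}$. The key observation is that the density (\ref{pexactbiv}) depends on $(u,v)$ only through the combinations $u+v$ and $u-v$, which become
$$u+v=c\left(\int_{x_0}^x\frac{dz}{\mathtt{v}(z)}+\int_{y_0}^y\frac{dz}{\mathtt{v}(z)}\right),\qquad u-v=c\left(\int_{x_0}^x\frac{dz}{\mathtt{v}(z)}-\int_{y_0}^y\frac{dz}{\mathtt{v}(z)}\right).$$
Then $c^2t^2-(u\pm v)^2=c^2\big[t^2-(\int_{x_0}^x\frac{dz}{\mathtt{v}(z)}\pm\int_{y_0}^y\frac{dz}{\mathtt{v}(z)})^2\big]$, so the factor $\frac{\lambda}{2c}\sqrt{c^2t^2-(u\pm v)^2}$ simplifies to $\frac{\lambda}{2}\sqrt{t^2-(\cdots)^2}$, with the constant $c$ cancelling inside the square root. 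From $f(x,y,t)=\frac{c^2}{\mathtt{v}(x)\mathtt{v}(y)}g(u,v,t)$ and the prefactor $\frac{e^{-\lambda t}}{2c^2}$ in (\ref{pexactbiv}), the $c^2$ factors cancel, leaving the prefactor $\frac{e^{-\lambda t}}{2\mathtt{v}(x)\mathtt{v}(y)}$ exactly as claimed.

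The only genuine subtlety, and the step I would treat most carefully, concerns the time derivatives $\frac{\partial}{\partial t}I_0(\cdots)$ appearing in (\ref{pexactbiv}). Since the arguments of the Bessel functions are rewritten in terms of $(x,y,t)$ with $x,y$ held fixed, and since the simplification only rescales the argument by factoring out the constant $c$ (which does not involve $t$), the operator $\frac{\partial}{\partial t}$ commutes transparently with the substitution and continues to act on the $t$-dependence inside the square root in the expected way; no $t$-dependence is hidden in the new spatial variables. Thus each bracketed factor transforms exactly as its static counterpart. Assembling the two bracketed factors and the prefactor yields the stated formula, valid on $\text{Int}(R_t)$, which is the image of $\text{Int}(S_t)$ under the inverse transformation. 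I expect no real obstacle here beyond bookkeeping; the entire argument is a clean application of the already-derived relation (\ref{bivchange}) to the closed-form density (\ref{pexactbiv}).
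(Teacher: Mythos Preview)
Your proposal is correct and follows exactly the approach of the paper, which simply states that the result follows immediately by combining formulas (\ref{bivchange}) and (\ref{pexactbiv}). You have merely made explicit the bookkeeping (Jacobian, cancellation of $c$, commutation of $\partial/\partial t$ with the substitution) that the paper leaves implicit.
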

\begin{proof}The result follows immediately by combining formulas (\ref{bivchange}) and (\ref{pexactbiv}).\end{proof}

We now remark that the distribution of $(X(t),\,Y(t))$ exhibits a singular component on the boundary of the support $\partial R_t$. In particular, by means of usual combinatorial techniques, it can be shown that $$\mathbb{P}\big((X(t),\,Y(t))\in\partial R_t\big)=2e^{-\frac{\lambda t}{2}}-e^{-\lambda t}.$$ Hence, for a complete analysis of the process, the exact distribution of $(X(t),\,Y(t))$ on the boundary $\partial R_t$ must be investigated. To this end, we begin by examining the general case for an arbitrary velocity function $\mathtt{v}(\cdot)$. Subsequently, we will consider a specific choice of $\mathtt{v}(\cdot)$ to illustrate how the expressions simplify and to gain geometric insight into the behavior of the process.\\
\noindent Recall that the support $R_t$ of the process $(X(t),\,Y(t))$ and the support of $(U(t),\,V(t))$, that is the square $S_t$, can be mapped onto each other by means of the transformation (\ref{UVdef}). Moreover, the method typically used for studying the distribution of $(U(t),\,V(t))$ on the boundary $\partial S_t$ of its support is to restrict the analysis to the side of the square which belongs to the first quadrant of the Cartesian plane, that is the set
 $$S^1_t=\{(u,v)\in\mathbb{R}^2: \;\;v=ct-u,\, 0<u<ct\}.$$
The distribution on the remaining sides can be studied analogously. The same principle applies in the case of space-varying velocity. Therefore, we restrict the analysis to the portion of the boundary $\partial R_t$ which belongs to the first quadrant:
$$R^1_t=\left\{(x,y)\in\mathbb{R}^2:\;\;\int_{x_0}^x\frac{1}{\mathtt{v}(z)}dz+\int_{y_0}^y\frac{1}{\mathtt{v}(z)}dz=t,\;\, 0<\int_{x_0}^x\frac{1}{\mathtt{v}(z)}dz<t\right\}.$$
Hence, to study the distribution of $(X(t),\,Y(t))$ on $\partial R_t$, we consider, without loss of generality, the side $R^1_t$. We propose two different ways to study the behaviour of the process on $R^1_t$. The first approach is based on the construction of a suitable coordinate to identify the position of $(X(t),\,Y(t))$ on $R^1_t$, namely $$H(t)=\int_{x_0}^{X(t)}\frac{c}{\mathtt{v}(z)}dz-\int_{y_0}^{Y(t)}\frac{c}{\mathtt{v}(z)}dz.$$
It can be verified that $H(t)\in[-ct,ct]$. Using the coordinate $H(t)$ permits us to employ well-known results concerning planar random motions with constant velocity. In particular, it holds that
\begin{align}\mathbb{P}\Big((X(t),\,&Y(t))\in R^1_t,\;H(t)\in d\eta\Big)/d\eta\nonumber\\
=&\mathbb{P}\Big(U(t)+V(t)=ct,\;U(t)-V(t)\in d\eta\Big)/d\eta\nonumber\\
=&\frac{e^{-\lambda t}}{4c}\left[\frac{\lambda}{2}\;I_0\left(\frac{\lambda}{2c}\sqrt{c^2t^2-\eta^2}\right)+\;\frac{\partial}{\partial t}I_0\left(\frac{\lambda}{2c}\sqrt{c^2t^2-\eta^2}\right)\right],\qquad \lvert\eta\lvert<ct.\label{Hdistribu}\end{align}
We refer to formula (17) of the paper by Marchione and Orsingher \cite{marchione} for details. However, since we find the coordinate $H(t)$ not inuitive, we introduce an alternative way to study the distribution of $(X(t),\,Y(t))$ on $R^1_t$. In particular, we propose to identify the position of the process on $R^1_t$ through its abscissa $X(t)$. We begin our analysis by examining the simpler case of constant velocity $c$. Hence, we study the probability density function \begin{equation}\label{ialen}q(u,t)=\mathbb{P}\big(U(t)+V(t)=ct,\;U(t)\in du\big)/du,\qquad \lvert u \lvert<ct,\;t>0.\end{equation}
We first define, for $j=0,1$, the auxiliary density functions
\begin{equation}\label{qdefinition}q_j(u,t)=\mathbb{P}\big(U(t)+V(t)=ct,\;U(t)\in du,\;D(t)=d_j\big)/du,\qquad \lvert u \lvert<ct,\;t>0.\end{equation}
By means of standard methods, it can be shown that the following system of equations holds
\begin{equation*}
\begin{cases}
q_0(u,t+\Delta t)=q_0(u-c\Delta t,t)\,(1-\lambda \Delta t)+q_1(u,t)\,\frac{\lambda}{2}\Delta t+o(\Delta t)\\
q_1(u,t+\Delta t)=q_1(u,t)\,(1-\lambda \Delta t)+q_0(u-c\Delta t,t)\,\frac{\lambda}{2}\Delta t+o(\Delta t)
\end{cases}
\end{equation*}
which implies that
\begin{equation}\label{UVsys}
\begin{dcases}
\frac{\partial q_0}{\partial t}=-c\frac{\partial q_0}{\partial u}+\frac{\lambda}{2}\,q_1-\lambda\,q_0\\
\frac{\partial q_1}{\partial t}=\frac{\lambda}{2}\,q_0-\lambda\,q_1.
\end{dcases}
\end{equation}
In view of the system (\ref{UVsys}), the probability density function (\ref{qdefinition}) satisfies the second-order partial differential equation
\begin{equation}\label{pdeqlong}\left(\frac{\partial^2}{\partial t^2}+c\frac{\partial^2}{\partial u\partial t}+2\lambda\frac{\partial}{\partial t}+c\lambda\frac{\partial}{\partial u}+\frac{3\lambda^2}{4}\right)\,q=0.\end{equation} The density function $q$ emerging from equation (\ref{pdeqlong}) is given in the following theorem.
\begin{thm}The probability density function (\ref{ialen}) reads
\begin{equation}\label{q_distr}q(u,t)=\frac{e^{-\lambda t}}{2c}\left[\frac{\lambda}{2}\,I_0\left(\frac{\lambda}{c}\,\sqrt{u\,(ct-u)}\right)+\frac{\partial}{\partial t}\,I_0\left(\frac{\lambda}{c}\,\sqrt{u\,(ct-u)}\right)\right],\qquad 0<u<ct.\end{equation}
\end{thm}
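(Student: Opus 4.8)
The plan is to avoid integrating the second-order equation (\ref{pdeqlong}) from scratch and instead reduce the problem to the already-known law (\ref{Hdistribu}) of the coordinate $H(t)=U(t)-V(t)$ on the side $R^1_t$. On the event $\{U(t)+V(t)=ct\}$ one has $V(t)=ct-U(t)$, hence $H(t)=2U(t)-ct$, so the map $u\mapsto\eta=2u-ct$ is an affine bijection of the edge onto itself. First I would change variables in (\ref{Hdistribu}): since $d\eta=2\,du$ at fixed $t$, the density of $U(t)$ on the edge satisfies $q(u,t)=2\,p(2u-ct,t)$, where $p(\eta,t)$ denotes the right-hand side of (\ref{Hdistribu}). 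The decisive algebraic simplification is $c^2t^2-\eta^2=c^2t^2-(2u-ct)^2=4\,u(ct-u)$, so the argument $\frac{\lambda}{2c}\sqrt{c^2t^2-\eta^2}$ collapses to $\frac{\lambda}{c}\sqrt{u(ct-u)}$, which is exactly the argument in (\ref{q_distr}), while the prefactor $\frac{e^{-\lambda t}}{4c}$ becomes $\frac{e^{-\lambda t}}{2c}$ after the factor $2$. This route yields a closed-form expression of the type (\ref{q_distr}).

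An independent derivation, which I would use as a check, is to solve (\ref{pdeqlong}) directly. I would pass to the characteristic coordinates $r=u$, $s=ct-u$, under which $\partial_t=c\,\partial_s$ and $\partial_u=\partial_r-\partial_s$; a short computation shows that the principal part $\partial_t^2+c\,\partial_u\partial_t$ becomes $c^2\,\partial_r\partial_s$, while the first-order part $2\lambda\partial_t+c\lambda\partial_u$ becomes $c\lambda(\partial_r+\partial_s)$. Since $r+s=ct$, the substitution $q=e^{-\lambda t}w=e^{-\frac{\lambda}{c}(r+s)}w$ eliminates both first-order terms and reduces (\ref{pdeqlong}) to the canonical Bessel-type equation $w_{rs}=\frac{\lambda^2}{4c^2}\,w$. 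Using $I_0'=I_1$ and $I_1'=I_0-\tfrac1zI_1$ one checks that $w=I_0(\frac{\lambda}{c}\sqrt{rs})=I_0(\frac{\lambda}{c}\sqrt{u(ct-u)})$ solves this equation; consequently $e^{-\lambda t}I_0$ together with its $t$- and $u$-derivatives are all solutions of (\ref{pdeqlong}), and $q$ is the particular linear combination of these singled out by the boundary data.

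The hard part will be pinning down that combination, i.e. the precise weight of the $I_1$ term and the overall normalization. Two subtleties enter. Since (\ref{pdeqlong}) is genuinely second order in $t$, two conditions are needed; I would extract them from the first-order system (\ref{UVsys}), which fixes $\partial_t q$ in terms of $\partial_u q$ as $t\to0^+$, together with the corner behaviour, namely the atoms $\tfrac14 e^{-\lambda t}$ carried at $u=0$ and $u=ct$ by the trajectories that never perform an edge-preserving turn. In the change-of-variables route the analogous delicate point is that the derivative $\frac{\partial}{\partial t}$ in (\ref{Hdistribu}) is taken at fixed $\eta$, whereas (\ref{q_distr}) is written in the variable $u$; rewriting it couples $u$ and $t$ through $\eta=2u-ct$, and tracking this coupling is exactly what determines the correct coefficient in front of $I_1$. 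This is the step most prone to error.

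As a final consistency check I would verify (\ref{q_distr}) probabilistically. Conditioning on the number $n$ of edge-preserving turns, the edge event factorizes: independently of the current state, each Poisson event is edge-preserving or edge-leaving with probability $\tfrac12$, so remaining on the edge with exactly $n$ turns contributes the weight $e^{-\lambda t}(\lambda/2)^n$ (times $\tfrac14$ for the initial direction), and the turn instants are uniform order statistics on $[0,t]$. The occupation time $U(t)/c$ spent in direction $d_0$ is then a sum of alternate spacings, whose density is a Beta density; summing the resulting series over $n$ with weights $(\lambda/2)^n$ reproduces $I_0$ from the odd-$n$ terms and $I_1$ from the even-$n$ terms, recovering (\ref{q_distr}).
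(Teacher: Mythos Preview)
Your primary route—pulling back the known edge density (\ref{Hdistribu}) along the affine map $\eta=2u-ct$—is genuinely different from the paper's approach. The paper works directly from the second–order equation (\ref{pdeqlong}): it sets $\widetilde q=e^{\lambda t}q$, observes that under $z=\sqrt{u(ct-u)}$ the equation collapses to the modified Bessel equation, takes the ansatz $q=e^{-\lambda t}\bigl[A\,I_0+B\,\partial_t I_0\bigr]$, and fixes $A,B$ by matching the total mass $\int_0^{ct}q\,du$ against (\ref{S1prob}). Your characteristic–coordinate check ($r=u$, $s=ct-u$, giving $w_{rs}=\tfrac{\lambda^2}{4c^2}w$) is essentially the same reduction in different clothing, so that part agrees with the paper.

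The subtlety you flag about the $\partial_t$–derivative is, however, not a cosmetic detail but a real obstruction to reaching (\ref{q_distr}) by your change-of-variables route. Since $\eta=2u-ct$, one has $\partial_t\big|_\eta=\partial_t\big|_u+\tfrac{c}{2}\,\partial_u$, so substituting $\eta=2u-ct$ into (\ref{Hdistribu}) and multiplying by the Jacobian $2$ gives
\[
q(u,t)=\frac{e^{-\lambda t}}{2c}\Bigl[\tfrac{\lambda}{2}\,I_0+\partial_t I_0\big|_u+\tfrac{c}{2}\,\partial_u I_0\Bigr],
\]
which differs from (\ref{q_distr}) by the extra term $\tfrac{e^{-\lambda t}}{4}\,\partial_u I_0\bigl(\tfrac{\lambda}{c}\sqrt{u(ct-u)}\bigr)$. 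Equivalently, the $I_1$–coefficient coming from (\ref{Hdistribu}) is $\tfrac{\lambda ct}{4\sqrt{u(ct-u)}}$, whereas in (\ref{q_distr}) it is $\tfrac{\lambda u}{2\sqrt{u(ct-u)}}$; these coincide only at the midpoint $u=ct/2$. The discrepancy is odd about $u=ct/2$, integrates to zero over $(0,ct)$, and itself solves (\ref{pdeqlong}), so neither the normalization you would invoke nor the second–order PDE can distinguish the two expressions. Thus your primary argument, carried out honestly, does not establish (\ref{q_distr}) as written; it yields the $u\leftrightarrow ct-u$ symmetric version instead. To decide which representation is the correct density you need a finer input than the ones you list—for instance, feeding the candidate back into the first–order system (\ref{UVsys}), or computing the endpoint behaviour directly from the occupation-time series you sketch—and that step is precisely what is missing from the proposal.
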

\begin{proof}
In order to obtain (\ref{q_distr}) from (\ref{pdeqlong}), we define the function \begin{equation}\label{qtildedef}\widetilde{q}(u,t)=e^{\lambda t}\,q(u,t)\end{equation} which satisfies the partial differential equation
\begin{equation}\label{pdeq}\left(\frac{\partial^2}{\partial t^2}+c\frac{\partial^2}{\partial u\partial t}-\frac{\lambda^2}{4}\right)\,\widetilde{q}=0.\end{equation}
By performing the change of variables \begin{equation}\label{zdef}z=\sqrt{u\,(ct-u)}\end{equation}
equation (\ref{pdeq}) reduces to the Bessel equation \begin{equation}\label{Besseleq}\frac{d^2\widetilde{q}}{dz^2}+\frac{1}{z}\,\frac{d\,\widetilde{q}}{dz}-\frac{\lambda^2}{c^2}\,\widetilde{q}=0.\end{equation} The general solution to equation (\ref{Besseleq}) is $$\widetilde{q}(z)=A\,I_0\left(\frac{\lambda}{c}\,z\right)+B\,K_0\left(\frac{\lambda}{c}\,z\right)$$ where $I_0(\cdot)$ and $K_0(\cdot)$ represent the modified Bessel functions of the first and the second kind respectively. However, we disregard the term involving $K_0(\cdot)$ since it would make the final solution non-integrable. Thus, by inverting the transformations (\ref{zdef}) and (\ref{qtildedef}), we express the general solution to equation (\ref{pdeqlong}) in the form
$$q(u,t)=e^{-\lambda t}\left[A\,I_0\left(\frac{\lambda}{c}\,\sqrt{u\,(ct-u)}\right)+B\frac{\partial}{\partial t}\,I_0\left(\frac{\lambda}{c}\,\sqrt{u\,(ct-u)}\right)\right].$$ Observe that we have included an additional term involving the time-derivative of the Bessel function in order to introduce greater flexibility into the general solution. This poses no issue as the Bessel function is a solution to  (\ref{pdeqlong}), and thus its derivative must be a solution too. This immediately follows by the fact that equation (\ref{pdeq}) is homogeneous and has constant coefficients. We now need to calculate the coefficients $A$ and $B$. For this purpose, we recall that \begin{equation}\mathbb{P}\left(U(t)+V(t)=ct,\; 0<U(t)<ct\right)=\frac{1}{2}\left(e^{\frac{\lambda}{2}t}-e^{-\lambda t}\right).\label{S1prob}\end{equation} We refer to the paper by Orsingher and Marchione \cite{orsinghermarchione} for details about formula (\ref{S1prob}). Hence, it must hold that $$\int_0^{ct}q(u,t)\,du=\frac{1}{2}\left(e^{\frac{\lambda}{2}t}-e^{-\lambda t}\right).$$
By using the relationship $$\int_0^t I_0\left(K\sqrt{s(t-s)}\right)\,ds=\frac{1}{K}\left(e^{\frac{Kt}{2}}-e^{-\frac{Kt}{2}}\right)$$
it can be verified that $$A=\frac{\lambda}{4c},\qquad\qquad B=\frac{1}{2c}$$
which completes the proof.
\end{proof}
Formula (\ref{q_distr}) describes the continuous distribution of the abscissa $U(t)$ when the process $(U(t),\,V(t))$ lies on $S^1_t$. The arguments above can be extended to the case of space-varying velocity. Hence, we now consider the distribution of $X(t)$ when the vector $(X(t),\,Y(t))$ lies on $R^1_t$. To study the probability density function 
$$h(x,t)=\mathbb{P}\left(\left(X(t),\,Y(t)\right)\in R^1_t,\;X(t)\in dx\right),\quad x\in\left\{w\in\mathbb{R}:\;0<\int_{x_0}^w\frac{dz}{\mathtt{v}(z)}<t\right\}$$ we consider, for $j=0,1$, the auxiliary density functions
\begin{align*}h_j(x,t)=\mathbb{P}\big(\left(X(t),\,Y(t)\right)\in R^1_t,\;X(t)\in& dx,\;D(t)=d_j\big),\nonumber\\& x\in\left\{w\in\mathbb{R}:\;0<\int_{x_0}^w\frac{dz}{\mathtt{v}(z)}<t\right\}.\end{align*}
By using the arguments discussed in section \ref{prelimsec}, it can be proved that
\begin{equation}\label{XYboundarysys}
\begin{dcases}
\frac{\partial h_0}{\partial t}=-\frac{\partial}{\partial x}\left[\mathtt{v}(x)\, h_0\right]+\frac{\lambda}{2}\,h_1-\lambda\,h_0\\
\frac{\partial h_1}{\partial t}=\frac{\lambda}{2}\,h_0-\lambda\,h_1.
\end{dcases}
\end{equation}
Clearly, the solution to the system (\ref{XYboundarysys}) is strictly related to the solution to (\ref{UVsys}). Indeed, the relationship (\ref{UVdef}) implies that, for $j=0,1$, \begin{equation}c\;q_j(u,t)=h_j(x,t)\, \mathtt{v}(x)\label{309}\end{equation} where $u=c\,\int_{x_0}^x\frac{dz}{\mathtt{v}(z)}$ and $q(u,t)$ is defined in formula (\ref{q_distr}). By taking into account that $h(x,t)=h_0(x,t)+h_1(x,t)$, formula (\ref{309}) permits us to obtain the exact distribution of $X(t)$ on $R^1_t$, which reads
\begin{align}h(x,t)=\frac{e^{-\lambda t}}{2\,\mathtt{v}(x)}\Bigg[\frac{\lambda}{2}&\,I_0\left(\lambda\;\sqrt{\int_{x_0}^x\frac{dz}{\mathtt{v}(z)}\,\left(t-\int_{x_0}^x\frac{dz}{\mathtt{v}(z)}\right)}\;\right)\nonumber\\&+\frac{\partial}{\partial t}\,I_0\left(\lambda\;\sqrt{\int_{x_0}^x\frac{dz}{\mathtt{v}(z)}\,\left(t-\int_{x_0}^x\frac{dz}{\mathtt{v}(z)}\right)}\;\right)\Bigg].\label{325}\end{align}

We conclude this section by deriving explicit formulas for the distribution of the process $(X(t),\,Y(t))$ under a specific choice of the velocity function. In particular, we consider the special case \begin{equation}\mathtt{v}(x)=c\,(1-x^2)\label{1-x^2}\end{equation} and starting point at the origin of the Cartesian plane. Observe that the chosen velocity vanishes at $x=\pm1$ with linear rate. As we will see, this implies that the time-varying support $R_t$ is bounded. Under the assumption (\ref{1-x^2}), the process $(X(t),\,Y(t))$ admits, in view of formula (\ref{UVdef}), the representation
\begin{equation}\label{UVdef(1-x^2)}\begin{cases}X(t)=\tanh\left(U(t)\right)\\Y(t)=\tanh\left(V(t)\right)\end{cases}\end{equation} where $(U(t),\,V(t))$ is the motion with orthogonal directions and constant velocity $c$. The relationship (\ref{UVdef(1-x^2)}) implies that the distribution of $(X(t),\,Y(t))$ in the interior of its support is given by
\begin{align*}f(x,y,t)=\frac{1}{(1-x^2)(1-y^2)}\;g\left(\frac{1}{2}\log\Bigg(\frac{1+x}{1-x}\right),\;\frac{1}{2}&\log\left(\frac{1+y}{1-y}\right),\;t\Bigg),\nonumber\\& (x,y)\in\text{Int}(R_t),\;t>0\end{align*}
where $g$ is defined in formula (\ref{pexactbiv}). We are now interested in obtaining the explicit representation of $R_t$ given the velocity (\ref{1-x^2}). For this purpose, we make use of the fact that the supports $S_t$ and $R_t$ are mapped onto each other by means of the change of variables (\ref{UVdef(1-x^2)}). Consider the top-right side of the square $S_t$, that is the set $S^1_t$. Clearly, this set is mapped into $$R^1_t=\left\{(x,y)\in\mathbb{R}^2: \;\;y=\frac{\tanh(ct)-x}{1-x\tanh(ct)},\, 0<x<\tanh(ct)\right\}.$$ By using a similar procedure for the other sides of $S_t$, the support $R_t$ of $(X(t),\,Y(t))$ can be obtained explicitly. A representation of the boundary $\partial R_t$ is shown in figure \ref{fig:Rt}, along with the equations describing its four sides.
\begin{figure}[h!]
\centering
\includegraphics[scale=0.7]{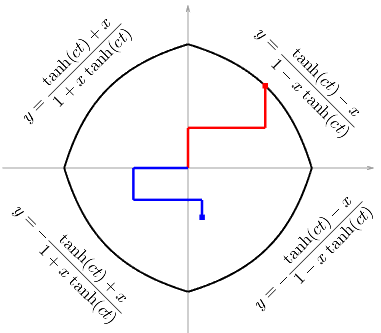}
\caption{boundary $\partial R_t$ of the support of the process $(X(t),\,Y(t))$ with space-varying velocity given by formula (\ref{1-x^2}). The equations describing the four sides of the boundary are given explicitly in the figure. It can be easily verified that, as $t\to+\infty$, the boundary approaches the square $[-1,1]^2$. The support can be obtained by applying a suitable nonlinear transformation to the square support of the process $(U(t),\,V(t))$ with constant velocity. Two sample paths are illustrated. The red path alternates only two contiguous directions, and thus corresponds to a particle lying on the boundary of the support.}
\label{fig:Rt}
\end{figure}
To study the distribution of $(X(t),\,Y(t))$ on $\partial R_t$, we consider, without loss of generality, the side $R^1_t$. The first way to describe the distribution of the process on $R^1_t$ is based on the use of the coordinate $$H(t)=\frac{1}{2}\log\left(\frac{(1+X(t))\,(1-Y(t))}{(1-X(t))\,(1+Y(t))}\right).$$
When $(X(t),\,Y(t))\in R^1_t$, it can be verified that $H(t)=0$ if and only if $(X(t),\,Y(t))$ lies at the midpoint of $R^1_t$. Similarly, the extrema of 
$R^1_t$ are identified by the condition $H(t)=\pm ct$. By using the coordinate $H(t)$, the distribution of the process on $R^1_t$ is immediately obtained by means of formula (\ref{Hdistribu}). Alternatively, the position of $(X(t),\,Y(t))$ on $\partial R_t$ can be identified by its abscissa as in formula (\ref{325}). In this case, it holds that
$$h(x,t)=\frac{1}{(1-x^2)}\;\,q\left(\frac{1}{2}\log\left(\frac{1+x}{1-x}\right),\,t\right).$$

\section{Telegraph process with direction-dependent velocity}
\noindent In this section, we consider a one-dimensional random motion $X(t)$ whose space-varying velocity depends on the direction $D(t)$. In particular, given two positive-valued continuously differentiable functions $\mathtt{v}_0(\cdot)$ and $\mathtt{v}_1(\cdot)$, we assume that the process moves with velocity $\mathtt{v}_0\left(X(t)\right)$ when it moves rightwards, that is when $D(t)=d_0$, and with velocity $-\mathtt{v}_1\left(X(t)\right)$ when $D(t)=d_1$. As usual, we assume that the direction changes are paced by a homogenous Poisson process $N(t)$ with intensity $\lambda$.\\
Our main conclusion is that, under the standard hydrodynamic limit where $\lambda$ and the velocities $\mathtt{v}_j(\cdot),\;j=0,1,$ tend to $\infty$ with a scaling such that $\frac{\lambda}{\mathtt{v}^2_j(\cdot)}$ remains finite, the process $X(t)$ does not exhibit a diffusive behavior. This conclusion is achieved by observing that, while diffusion processes have a mean which reflects the initial conditions of the process, the direction‐dependent speeds can induce, in general, a mean which is independent from the initial value $x_0$ of the process.\\
Denote by $f_j(x,t)$, for $j=0,1$, the density functions $$f_j(x,t)=\mathbb{P}\left(X(t)\in\mathop{dx},\,D(t)=d_j\right)/\mathop{dx}$$ and let $f(x,t)$ be the probability density function of $X(t)$, that is $$f(x,t)=f_0(x,t)+f_1(x,t).$$
It can be verified that the following system of partial differential equations holds:
\begin{equation}
\begin{dcases}
\frac{\partial f_0}{\partial t}=-\frac{\partial}{\partial x}\big[\mathtt{v}_0(x)\,f_0\big]+\lambda f_1-\lambda f_0\\
\frac{\partial f_1}{\partial t}=\frac{\partial}{\partial x}\big[\mathtt{v}_1(x)\,f_1\big]+\lambda f_0-\lambda f_1.
\end{dcases}\label{directiondependentsystem}
\end{equation}
However, in contrast to the cases previously examined in the present paper, we were not able to combine the equation in (\ref{directiondependentsystem}) to obtain a second-order partial differential equation for the density $f(x,t)$. Since such equation is the usual starting point for analyzing the hydrodynamic limit of finite-velocity processes, the inability to obtain such equation raises the question of whether the hydrodynamic limit of $X(t)$ admits a diffusive behaviour when the velocity is direction-dependent. To investigate this problem, we consider the special case
\begin{equation}\mathtt{v}_0(x)=c\,(1-x),\qquad \mathtt{v}_1(x)=c\,x\label{doublevelocity}\end{equation} with $c>0$, and we assume that $X(0)=x_0$, with $x_0\in(0,1)$. Clearly, in view of the functional forms of the velocity functions, the extrema of the interval $(0,1)$ act as unreachable barriers for $X(t)$. We are interested in studying the mean $\mathbb{E}\left[X(t)\right]$ of the process. We first introduce some preliminary notation. For integer $k\ge1$, we denote by $T_k$ the $k$-th arrival time of the Poisson process $N(t)$, that is $$T_k=\inf\{t:\;N(t)\ge k\}.$$ As usual, the paths of the Poisson process are assumed to be càdlàg. This implies that, for all $k\ge1$, $\forall t\in\left(T_{k-1},\,T_k\right),\; N(t)=N(T_{k-1}).$ Hence, in view of the assumption (\ref{doublevelocity}), it can be verified that
\begin{align}X(T_k)=&X(T_{k-1})\,e^{-c(T_k-T_{k-1})}\,\mathbbm{1}\{D(T_{k-1})=d_1\}\nonumber\\&\qquad\qquad+\left[1-\bigl(1-X(T_{k-1})e^{-c(T_k-T_{k-1})}\bigr)\right]\mathbbm{1}\{D(T_{k-1})=d_0\}\nonumber\\
=&X(T_{k-1})\,e^{-c(T_k-T_{k-1})}+\left(1-e^{-c(T_k-T_{k-1})}\right)\,\mathbbm{1}\{D(T_{k-1})=d_0\},\;\; k\ge1.\label{recursionTk}\end{align}
Observe that formula (\ref{recursionTk}) follows from the fact that, if $D(T_{k-1})=d_0$, then it holds that
\begin{equation}\frac{dX(t)}{dt}=c\,\left(1-X(t)\right),\qquad t\in(T_{k-1},\,T_k)\label{explanation1}\end{equation}
and similarly, if $D(T_{k-1})=d_1$, we have that
\begin{equation}\frac{dX(t)}{dt}=-c\,X(t),\qquad t\in(T_{k-1},\,T_k).\label{explanation2}\end{equation}
Hence, by integrating equations (\ref{explanation1}) and (\ref{explanation2}) over the interval $(T_{k-1},\,T_k)$, formula (\ref{recursionTk}) is obtained.\\
In general, for $T_{k-1}< t\le T_k$, it holds that
\begin{align*}X(t)=X(T_{k-1})\,e^{-c(t-T_{k-1})}+\left(1-e^{-c(t-T_{k-1})}\right)\,\mathbbm{1}\{D(T_{k-1})=d_0\},\qquad k\ge1.\end{align*}
In order to solve the recursion (\ref{recursionTk}), assume that, at the initial time $t=0$, the particle starts moving rightwards, that is $D(0)=d_0$. In this case, for $k\ge1$, it can be verified that
$$X(T_{2k})=X(T_{2k-1})\,e^{-c(T_{2k}-T_{2k-1})}$$ and $$X(T_{2k+1})=1-\bigl(1-X(T_{2k})\bigr)\,e^{-c(T_{2k+1}-T_{2k})}$$
which implies that 
\begin{equation}\label{reczerosol}X(T_{2k})=x_0e^{-c\,T_{2k}}+\sum_{j=0}^{2k-1}(-1)^{j+1}\,e^{-c(T_{2k}-T_j)}\end{equation}
and
\begin{equation}\label{reconesol}X(T_{2k+1})=1+x_0e^{-c\,T_{2k+1}}+\sum_{j=0}^{2k}(-1)^{j+1}\,e^{-c(T_{2k+1}-T_j)}.\end{equation}

\noindent In view of formulas (\ref{reczerosol}) and (\ref{reconesol}), it can be verified that, if $D(0)=d_0$, for $t\ge0$ and $k\ge 0$ it holds that 
\begin{equation}\label{solrec1}X(t)=\begin{dcases}1+x_0\,e^{-ct}+\sum_{j=0}^{2k}(-1)^{j+1}e^{-c(t-T_j)}\qquad &\text{if}\;T_{2k}<t\le T_{2k+1}\\
x_0\,e^{-ct}+\sum_{j=0}^{2k+1}(-1)^{j+1}e^{-c(t-T_j)}\qquad &\text{if}\;T_{2k+1}<t\le T_{2k+2}\end{dcases}\end{equation}

\noindent Similarly, if $D(0)=d_1$ it holds that
$$X(T_{2k})=1-\bigl(1-X(T_{2k-1})\bigr)\,e^{-c(T_{2k}-T_{2k-1})}$$ and $$X(T_{2k+1})=X(T_{2k})\,e^{-c(T_{2k+1}-T_{2k})}$$
which implies that 
$$X(T_{2k})=\sum_{j=0}^{2k-1}(-1)^j\,e^{-c(T_{2k}-T_j)}-(1-x_0)e^{-c\,T_{2k}}+1$$
and
$$X(T_{2k+1})=\sum_{j=0}^{2k}(-1)^j\,e^{-c(T_{2k+1}-T_j)}-(1-x_0)e^{-c\,T_{2k+1}}.$$
Hence, if $D(0)=d_1$, for $t\ge0$ and $k\ge 0$ it holds that 
\begin{equation}\label{solrec2}X(t)=\begin{dcases}-(1-x_0)\,e^{-ct}+\sum_{j=0}^{2k}(-1)^{j}e^{-c(t-T_j)}\qquad &\text{if}\;T_{2k}<t\le T_{2k+1}\\
1-(1-x_0)\,e^{-ct}+\sum_{j=0}^{2k+1}(-1)^{j}e^{-c(t-T_j)}\qquad &\text{if}\;T_{2k+1}<t\le T_{2k+2}\end{dcases}\end{equation}

\noindent We now recall that, conditional on the event $N(t)=n$, the arrival time $T_j$ follows the distribution of the $j$-th order statistic of $n$ independent and identically distributed uniform variables over the interval $(0,t)$. In other words, it holds that $$\mathbb{P}\left(T_j\in\mathop{ds}\big\lvert N(t)=n\right)/\mathop{ds}=\frac{n!}{(j-1)!\,(n-j)!}\,\frac{s^{j-1}\,(t-s)^{n-j}}{t^n},\qquad s\in(0,t).$$
This implies that \begin{align}\mathbb{E}\left[e^{c\,T_j}\big\lvert N(t)=n\right]=&\frac{n!}{(j-1)!\,(n-j)!}\;\frac{1}{t^n}\int_0^t e^{cs} s^{j-1}(t-s)^{n-j}ds\nonumber\\
=&\frac{n!}{(j-1)!\,(n-j)!}\int_0^1 e^{ctu} u^{j-1}(1-u)^{n-j}du.\label{betapois}\end{align}
By using the integral representation of the confluent hypergeometric function
$${}_1F_1(a;b;z)=\frac{\Gamma(b)}{\Gamma(a)\,\Gamma(b-a)}\,\int_0^1e^{zu}u^{a-1}(1-u)^{b-a-1}du,\qquad z\in\mathbb{C}$$
with $\Re(a)>0,\;\Re(b)>0$, we can express formula (\ref{betapois}) as
\begin{equation}\label{Tjhyp}\mathbb{E}\left[e^{c\,T_j}\big\lvert N(t)=n\right]={}_1F_1\big(j;n+1;ct\big).\end{equation}
By taking the conditional expectations of (\ref{solrec1}) and (\ref{solrec2}) using formula (\ref{Tjhyp}), and recalling that formulas (\ref{solrec1}) and (\ref{solrec2}) respectively hold if $D(0)=d_0$ and $D(0)=d_1$, we obtain the following conditional expected values for $k\ge0$:
\begin{flalign*}
\mathbb{E}\left[X(t)\big\lvert D(0)=d_0,\, N(t)=2k\right]=1+x_0 e^{-ct}-e^{-ct}\sum_{j=0}^{2k}(-1)^j{}_1F_1\big(j;2k+1;ct\big)
\end{flalign*}
\vspace{-12mm}

\begin{flalign*}
\mathbb{E}\left[X(t)\big\lvert D(0)=d_0,\, N(t)=2k+1\right]=x_0e^{-ct}-e^{-ct}\sum_{j=0}^{2k+1}(-1)^j{}_1F_1\big(j;2(k+1);ct\big)
\end{flalign*}
\vspace{-12mm}

\begin{flalign*}
\mathbb{E}\left[X(t)\big\lvert D(0)=d_1,\, N(t)=2k\right]=-(1-x_0)e^{-ct}+e^{-ct}\sum_{j=0}^{2k}(-1)^j{}_1F_1\big(j;2k+1;ct\big)
\end{flalign*}
\vspace{-12mm}

\begin{flalign*}
\mathbb{E}&\left[X(t)\big\lvert D(0)=d_1,\, N(t)=2k+1\right]\\&\qquad\qquad=1-(1-x_0)e^{-ct}+e^{-ct}\sum_{j=0}^{2k+1}(-1)^j{}_1F_1\big(j;2(k+1);ct\big).\qquad\qquad\qquad\end{flalign*}
By using these formulas, it is straightforward to verify that the mean of the process $X(t)$ is independent of the number of direction changes that have occurred up to time $t$. Indeed, for all $n\ge0$, it holds that \begin{equation}\label{conditionaln}\mathbb{E}\left[X(t)\big\lvert N(t)=n\right]=x_0e^{-ct}+\frac{1-e^{-ct}}{2}.\end{equation}
Clearly, formula (\ref{conditionaln}) coincides with the unconditional mean too, that is
\begin{equation}\label{unconditionaln}\mathbb{E}\left[X(t)\right]=x_0e^{-ct}+\frac{1-e^{-ct}}{2}.\end{equation}
We now take the hydrodynamic limit for $\lambda,c\to+\infty$ of formula (\ref{unconditionaln}), which yields
\begin{equation}\lim_{\lambda,c\to+\infty}\mathbb{E}\left[X(t)\right]=\frac{1}{2}.\end{equation}
Hence, in the hydrodynamic limit, the mean of the process converges to the point $x=\frac{1}{2}$, losing any dependence on the initial position $x_0$. This behaviour is not consistent with that of a diffusion process, for which the distribution of the process should depend on the initial condition and spread around the starting point over time. We also emphasize that the asymptotic mean is independent of $t$. Hence, even if the limiting process starts at a point $x_0\neq\frac{1}{2}$, its mean approaches $\frac{1}{2}$ in an arbitrarily small time, possibly indicating some degenerate behaviour of $X(t)$ in the hydrodynamic limit. We leave the investigation of the asymptotic behavior of the process to future work. However, we conducted some numerical simulations of the process for large values of $c$, with $\lambda=c^2$. The results suggest that approximately half of the sample paths tend to rapidly collapse to the point $x=0$, while the other half collapses near $x=1$ in a small time lapse $t$.

\section{Random motions with time-dependent velocity}
\noindent In this section, we study the behaviour of the telegraph process $\{X(t)\}_{t\ge0}$ under the assumption that the velocity is time-dependent. In particular, we assume that the velocity of the process at time $t$ is $$\mathtt{v}(t)=c\,\sigma(t)$$ where $c$ is a stricly positive constant and $\sigma:[0,+\infty)\to\mathbb{R}^+$ is a positive-valued function of time. Moreover, we impose an additional technical condition on $\sigma(\cdot)$, whose relevance will be clear later. In particular, we assume that $\sigma(\cdot)$ is square-integrable, in the sense that, for all $t>0$, $\int_0^t\sigma^2(s)\,ds<+\infty$. By adopting the usual notation for the probability density functions $$f_j(x,t)=\mathbb{P}\big(X(t)\in dx,\;D(t)=d_j\big),\qquad j=0,1$$ and assuming that $X(0)=0$, it is clear that the system of partial differential equations
\begin{equation}
\begin{dcases}
\frac{\partial f_0}{\partial t}=-c\, \sigma(t)\frac{\partial f_0}{\partial x}+\lambda f_1-\lambda f_0\\
\frac{\partial f_1}{\partial t}=c\, \sigma(t)\frac{\partial f_1}{\partial x}+\lambda f_0-\lambda f_1
\end{dcases}\label{sigmatsys}
\end{equation}
holds with initial conditions $f_0(x,0)=f_1(x,0)=\frac{1}{2}\,\delta(x)$. Hence, the probability density function $f$ defined as $$f(x,t)=f_0(x,t)+f_1(x,t)$$ is a solution to the partial differential equation
\begin{equation}\frac{\partial}{\partial t}\left[\frac{1}{\sigma(t)}\,\frac{\partial f}{\partial t}\right]+\frac{2\lambda}{\sigma(t)}\,\frac{\partial f}{\partial t}=c^2\sigma(t)\,\frac{\partial^2 f}{\partial x^2}.\label{sigmapde}\end{equation}
Stochastic processes whose distribution is governed by equation (\ref{sigmapde}) have been investigated by Masoliver and Weiss \cite{masoliverweiss}. As the authors pointed out, finding an explicit solution to equation (\ref{sigmapde}) is a difficult task. By using a Wentzel–Kramers–Brillouin approximation, they were able to obtain an approximate solution for large values of $\lambda$, which concentrates the probability mass near the starting point.\\
We now discuss the hydrodynamic limit of the process $X(t)$ when both $\lambda$ and $c$ tend to $+\infty$, with $\frac{\lambda}{c^2}\to1$. Dividing equation (\ref{sigmapde}) by $c^2$ and taking the limit for $\lambda,c\to+\infty$, we obtain the diffusion-type equation
\begin{equation}\,\frac{\partial f}{\partial t}=\frac{\sigma^2(t)}{2}\,\frac{\partial^2 f}{\partial x^2}.\label{sigmapdelimit}\end{equation}
Solving the partial differential equation (\ref{sigmapdelimit}) yields that, in the hydrodynamic limit, the process $X(t)$ follows the distribution
\begin{equation}\lim_{\lambda,c\to+\infty}f(x,t)=\frac{1}{\sqrt{2\pi \int_0^t\sigma^2(u)\mathop{du}}}\;\exp\left(-\frac{x^2}{2\int_0^t\sigma^2(u)\mathop{du}}\right).\label{sigmatdistr}\end{equation} The key for interpreting the limiting distribution (\ref{sigmatdistr}) is the following. We first claim that, before passing to the hydrodynamic limit, the process $X(t)$ emerging from the system (\ref{sigmatsys}) admits the explicit representation \begin{equation}X(t)=\int_0^t\sigma(u)\,d\mathcal{T}(u)\label{intdT}\end{equation} where $\{\mathcal{T}(t)\}_{t\ge0}$ is a standard telegraph process with velocity $c$. This can be proved by applying the methods described in section \ref{prelimsec}. We now observe that  the right-hand side of equation (\ref{sigmatdistr}) coincides with the distribution of the It\^o integral $\int_0^t\sigma(u)\,dB(u)$, where $\{B(t)\}_{t\ge0}$ is a standard Brownian motion. Hence, the relationship (\ref{sigmatdistr}) indicates that the following identity in distribution holds:
\begin{equation}\lim_{\lambda,c\to+\infty} X(t)\overset{i.d.}{=}\int_0^t\sigma(u)\,dB(u).\label{itolimit}\end{equation} In view of the representation (\ref{intdT}) for the process $X(t)$, the identity in distribution (\ref{itolimit}) is consistent with the well-known fact the the telegraph process $\mathcal{T}(t)$ converges, in the hydrodynamic limit, to a standard Brownian motion.\\
\noindent While obtaining the exact distribution of the process $X(t)$ is an open problem, it is interesting to observe that its covariance structure can be exactly determined.
\begin{thm}For $s,t\ge0$, the autocovariance of the process $X(t)$ is
\begin{equation}\label{autocovbeforelimit}\mathbb{E}\left[X(s)\,X(t)\right]=c^2\int_0^t\int_0^s e^{-2\lambda\lvert x-y\lvert}\sigma(x)\sigma(y)\mathop{dx}\mathop{dy}\end{equation} provided that the integral to the right-hand side of the equation exists.\end{thm}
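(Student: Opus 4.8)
The plan is to exploit the integral representation $X(t)=\int_0^t\sigma(u)\,d\mathcal{T}(u)$ from formula (\ref{intdT}). Writing $\epsilon(u)$ for the velocity-sign process of the standard telegraph process $\mathcal{T}(t)$, so that $\epsilon(u)\in\{-1,+1\}$ with $\epsilon(0)=\pm1$ equally likely and $d\mathcal{T}(u)=c\,\epsilon(u)\,du$ pathwise (recall that $\mathcal{T}(\cdot)$ is absolutely continuous with $\frac{d\mathcal{T}}{du}=\pm c$ almost everywhere), the representation becomes $X(t)=c\int_0^t\sigma(u)\,\epsilon(u)\,du$. The product $X(s)\,X(t)$ can then be written as the double integral
\begin{equation*}X(s)\,X(t)=c^2\int_0^t\int_0^s\sigma(x)\,\sigma(y)\,\epsilon(x)\,\epsilon(y)\,dx\,dy,\end{equation*}
so that the proof reduces to justifying the interchange of expectation and integration and to computing the correlation $\mathbb{E}[\epsilon(x)\,\epsilon(y)]$.

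First I would justify the application of Fubini's theorem. Since $|\epsilon(u)|=1$ almost surely, the integrand is dominated in absolute value by $|\sigma(x)\,\sigma(y)|$, and the square-integrability assumption on $\sigma(\cdot)$ gives, by the Cauchy--Schwarz inequality, $\int_0^t|\sigma(x)|\,dx\le\sqrt{t}\,\big(\int_0^t\sigma^2(x)\,dx\big)^{1/2}<+\infty$. Hence the double integral of the absolute value factorizes and is finite, which permits moving the expectation inside the integral to obtain
\begin{equation*}\mathbb{E}[X(s)\,X(t)]=c^2\int_0^t\int_0^s\sigma(x)\,\sigma(y)\,\mathbb{E}[\epsilon(x)\,\epsilon(y)]\,dx\,dy.\end{equation*}

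The central step is the evaluation of $\mathbb{E}[\epsilon(x)\,\epsilon(y)]$. Assuming without loss of generality $x\le y$, the identity $\epsilon(y)=\epsilon(x)\,(-1)^{N(y)-N(x)}$ holds, since each direction change flips the sign of the velocity. Because $\epsilon(x)^2=1$ and the Poisson increment $N(y)-N(x)$ is independent of the direction at time $x$, I would compute
\begin{equation*}\mathbb{E}[\epsilon(x)\,\epsilon(y)]=\mathbb{E}\big[(-1)^{N(y)-N(x)}\big]=\sum_{k=0}^{+\infty}(-1)^k\,\frac{\big(\lambda(y-x)\big)^k}{k!}\,e^{-\lambda(y-x)}=e^{-2\lambda(y-x)}.\end{equation*}
By symmetry in $x$ and $y$ this yields $\mathbb{E}[\epsilon(x)\,\epsilon(y)]=e^{-2\lambda|x-y|}$ in general, and substituting this correlation into the double integral gives exactly formula (\ref{autocovbeforelimit}).

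I expect the main obstacle to be conceptual rather than computational, and to lie in making rigorous the independence of the Poisson increment $N(y)-N(x)$ from the sign $\epsilon(x)$. This follows from the independent-increments property of the governing Poisson process together with the observation that $\epsilon(x)$ is a function only of the initial direction and of $N(x)$, both of which are determined by the behaviour of the motion on $[0,x]$. Once this independence is established, the dichotomous-noise correlation $e^{-2\lambda|x-y|}$ and the factorized Fubini bound make the remainder of the argument routine.
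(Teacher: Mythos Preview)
Your proposal is correct and follows essentially the same approach as the paper: the paper writes $\epsilon(u)=\frac{V(0)}{c}(-1)^{N(u)}$ (with $V(0)=\pm c$ the initial velocity, independent of $N$), substitutes into (\ref{intdT}), and reduces to computing $\mathbb{E}\big[(-1)^{N(x)+N(y)}\big]=e^{-2\lambda|x-y|}$, which is exactly your correlation $\mathbb{E}[\epsilon(x)\epsilon(y)]$. Your version is in fact slightly more careful, since you justify the Fubini step and spell out the independence argument, both of which the paper leaves implicit.
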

\begin{proof}To prove the theorem, we use the following representation for the standard telegraph process:
\begin{equation}\label{telrepr}\mathcal{T}(t)=V(0)\int_0^t(-1)^{N(u)}\,du\end{equation}
where $V(0)$ is a random variable defined as $$V(0)=\begin{cases}c\qquad &\text{with prob. }\frac{1}{2}\\-c\qquad &\text{with prob. }\frac{1}{2}\end{cases}$$ and independent from $\{N(t)\}_{t\ge0}$. By substituting formula (\ref{telrepr}) into (\ref{intdT}), we have that
\begin{equation*}X(t)=V(0)\int_0^t\sigma(u)\,(-1)^{N(u)}\,du\end{equation*} Thus, it holds that 
\begin{equation}\mathbb{E}\left[X(s)\,X(t)\right]=c^2\int_0^t\int_0^s \mathbb{E}\left[(-1)^{N(x)+N(y)}\right]\,\sigma(x)\sigma(y)\mathop{dx}\mathop{dy}.\label{hydroaut}\end{equation}
The proof is completed by observing that
\begin{equation*}\mathbb{E}\left[(-1)^{N(x)+N(y)}\right]=e^{-2\lambda\lvert x-y\lvert}.\end{equation*}
\end{proof}
It is interesting to observe that formula (\ref{autocovbeforelimit}) is consistent with the hydrodynamic limit given in formulas (\ref{sigmatdistr}) and (\ref{itolimit}). In particular, for $\lambda,c\to+\infty$ with the usual scaling, we have that $$\lim_{\lambda,c\to+\infty}\mathbb{E}\left[X(s)\,X(t)\right]=\int_0^{t\wedge s}\sigma^2(u)\,du$$ which coincides with the autocovariance function of the It\^o integral (\ref{itolimit}). A heuristic explanation of this fact can be given by observing that, for large values of $\lambda,c$ with $\frac{\lambda}{c^2}\simeq1$,  the kernel function $c^2\,e^{-2\lambda\lvert x-y\lvert}$ tends to concentrate the probability mass near the set $\{(x,y)\in\mathbb{R}^2:\;x=y\}.$ In other words, $$c^2\,e^{-2\lambda\lvert x-y\lvert}\simeq \delta(\lvert y-x\lvert)$$ which implies, in view of formula (\ref{hydroaut}), that 
\begin{equation*}\mathbb{E}\left[X(s)\,X(t)\right]\simeq\int_0^t\int_0^s\delta(\lvert x-y\lvert)\,\sigma(x)\sigma(y)\mathop{dx}\mathop{dy}=\int_0^{t\wedge s}\sigma^2(u)\,du.\end{equation*} Of course, a rigorous proof can be given under suitable assumptions on $\sigma(\cdot)$. In the following theorem, we present a proof assuming that $\sigma(\cdot)$ is continuous and finite-valued.
\begin{thm}Let $\sigma:[0,+\infty)$ be a strictly-positive continuous function such that $\sigma(t)<+\infty$ for all $t\in\mathbb{R}^+$. Then, for $\lambda,c\to+\infty$ such that $\frac{\lambda}{c^2}\to1$, it holds that 
$$\lim_{\lambda,c\to+\infty}\mathbb{E}\left[X(s)\,X(t)\right]=\int_0^{t\wedge s}\sigma^2(u)\,du,\qquad\forall s,t\ge0.$$\end{thm}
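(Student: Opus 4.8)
The plan is to treat the kernel $c^2 e^{-2\lambda\lvert x-y\rvert}$ appearing in formula (\ref{autocovbeforelimit}) as an approximate identity concentrating on the diagonal $\{x=y\}$, and to turn the heuristic delta-function argument sketched just before the statement into a rigorous one by means of the dominated convergence theorem. Since the integrand in (\ref{autocovbeforelimit}) is symmetric in $x$ and $y$, Fubini's theorem shows that $\mathbb{E}\left[X(s)\,X(t)\right]$ is symmetric in $s$ and $t$; hence I may assume without loss of generality that $s\le t$, so that $t\wedge s=s$.

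First I would rewrite the covariance as the iterated integral
\begin{equation*}\mathbb{E}\left[X(s)\,X(t)\right]=\int_0^s\sigma(y)\,J_{\lambda,c}(y)\,dy,\qquad J_{\lambda,c}(y)=c^2\int_0^t e^{-2\lambda\lvert x-y\rvert}\sigma(x)\,dx.\end{equation*}
The core step is to show that, for every fixed $y\in(0,s)$, one has $J_{\lambda,c}(y)\to\sigma(y)$ in the limit $\lambda,c\to+\infty$ with $\frac{\lambda}{c^2}\to1$. Because $s\le t$, each such $y$ lies in the interior of $[0,t]$, so I can choose $\delta<\min(y,\,t-y)$ and split the inner integral into the near region $\lvert x-y\rvert<\delta$ and the far region $\lvert x-y\rvert\ge\delta$. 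On the near region I use the continuity of $\sigma$ at $y$ to replace $\sigma(x)$ by $\sigma(y)$ up to an error $\varepsilon$, together with the elementary identity $c^2\int_{y-\delta}^{y+\delta}e^{-2\lambda\lvert x-y\rvert}\,dx=\frac{c^2}{\lambda}\bigl(1-e^{-2\lambda\delta}\bigr)\to1$; on the far region I bound $\sigma$ by $M=\max_{[0,t]}\sigma$ (finite by continuity) and estimate $c^2\int_{\lvert x-y\rvert\ge\delta}e^{-2\lambda\lvert x-y\rvert}\,dx=\frac{c^2}{\lambda}e^{-2\lambda\delta}\to0$. Letting $\varepsilon\to0$ then yields the claimed pointwise limit.

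It remains to interchange the limit with the outer integral. For this I would produce a uniform dominating bound: using $\sigma\le M$ on $[0,t]$ and extending the exponential to the whole line gives $J_{\lambda,c}(y)\le c^2 M\int_{-\infty}^{+\infty}e^{-2\lambda\lvert x-y\rvert}\,dx=\frac{c^2 M}{\lambda}$, which stays bounded (say by $2M$ for $\lambda$ large, since $\frac{c^2}{\lambda}\to1$). Thus $\sigma(y)\,J_{\lambda,c}(y)$ is dominated by the constant $2M^2$ on the finite interval $[0,s]$, and the dominated convergence theorem gives
\begin{equation*}\lim_{\lambda,c\to+\infty}\mathbb{E}\left[X(s)\,X(t)\right]=\int_0^s\sigma^2(y)\,dy=\int_0^{t\wedge s}\sigma^2(u)\,du.\end{equation*}

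I expect the main difficulty to be technical rather than conceptual: one must carefully track that the limit is taken jointly in the two coupled parameters $\lambda,c$ (through $\frac{\lambda}{c^2}\to1$) rather than in a single variable, and verify that the boundary points $y=0$ and $y=s$—where only half of the kernel mass is captured—do not affect the limit, which is immediate since they form a null set and the integrand remains uniformly bounded. The hypotheses that $\sigma$ is continuous and finite are precisely what deliver both the pointwise approximate-identity limit and the constant dominating function, so no additional regularity should be needed.
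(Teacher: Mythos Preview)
Your argument is correct and complete: the Fubini rewriting, the pointwise approximate-identity limit for $J_{\lambda,c}(y)$, the uniform bound $J_{\lambda,c}(y)\le \tfrac{c^2}{\lambda}M$, and the appeal to dominated convergence all go through exactly as you describe, and your remarks about the null boundary set and the coupled limit $\frac{\lambda}{c^2}\to1$ dispose of the only subtleties.

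The paper, however, proceeds differently. Instead of isolating one variable and treating the inner integral as an approximate identity, it works with the full two-dimensional integral over $[0,t]^2$: it carves the square into a diagonal strip $Q$ of half-width $\tfrac{1}{2\sqrt{\lambda}}$ and four off-diagonal regions $\Gamma_j$, shows by direct estimation that the $\Gamma_j$ contributions vanish, and on the strip performs the rotation $u=\tfrac{x+y}{2},\,v=\tfrac{x-y}{2}$ followed by the mean value theorem for integrals to extract $\sigma(u+\varepsilon(\lambda))\sigma(u-\varepsilon(\lambda))$. Your approach is the cleaner and more conceptual of the two: it avoids the explicit geometric decomposition, the specific strip width, and the mean value theorem, replacing them with a single invocation of dominated convergence. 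The paper's method, on the other hand, keeps the argument entirely elementary and makes the rate at which the off-diagonal mass dies visible through the explicit bound $\tfrac{c^2}{4\lambda^2}\bigl[(2\lambda t-2\sqrt{\lambda}-1)e^{-2\sqrt{\lambda}}+e^{-2\lambda t}\bigr]$.
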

\begin{proof}We must verify that 
$$\lim_{\lambda,c\to+\infty}c^2\int_0^t\int_0^s e^{-2\lambda\lvert x-y\lvert}\sigma(x)\sigma(y)\mathop{dx}\mathop{dy}=\int_0^{t\wedge s}\sigma^2(u)\,du,\qquad\forall s,t\ge0.$$ For simplicity, we consider the case $s=t$. The extension to the case $s\neq t$ is straightforward. We divide the integration set $[0,t]^2$ into five subsets as illustrated in figure \ref{fig:strip}.
\begin{figure}[h!]
\centering
\includegraphics[scale=0.68]{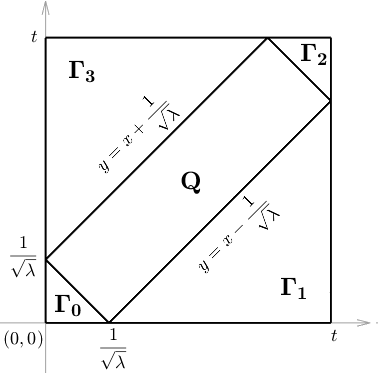}
\caption{subdivision of the integration set $[0,t]^2$ used to prove the hydrodynamic limit of the covariance function of the process $X(t)$. Observe that, as $\lambda\to+\infty$, the strip $Q$ collapses onto the diagonal of the square.}
\label{fig:strip}
\end{figure}
Thus, we can write that
\begin{align}c^2\int_0^t\int_0^t e^{-2\lambda\lvert x-y\lvert}\sigma(x)\sigma(y)\mathop{dx}\mathop{dy}=c^2\iint_{\substack{Q}}&e^{-2\lambda\lvert x-y\lvert}\sigma(x)\sigma(y)\mathop{dx}\mathop{dy}\nonumber\\&+c^2\sum_{j=0}^3\iint_{\substack{\Gamma_j}} e^{-2\lambda\lvert x-y\lvert}\sigma(x)\sigma(y)\mathop{dx}\mathop{dy}.\label{zonti}\end{align}
We claim that, in the formula above, the integrals over the sets $\Gamma_j,\;j=0,1,2,3,$ converge to 0 as $\lambda,c\to+\infty$. We verify the claim for the integral over $\Gamma_1$, as the underlying idea can be easily applied to the other sets too. By defining $k_t=\max_{s\in[0,t]}\sigma(s)$, and observing that $k_t<+\infty$ in view of the assumptions on $\sigma(\cdot)$, we clearly have that 
\begin{align}0\le c^2\iint_{\substack{\Gamma_1}} e^{-2\lambda\lvert x-y\lvert}\sigma(x)\sigma(y)\mathop{dx}\mathop{dy}&\le c^2 k_t^2\iint_{\substack{\Gamma_1}} e^{-2\lambda\lvert x-y\lvert}\mathop{dx}\mathop{dy}\nonumber\\&=\frac{c^2k_t^2}{4\lambda^2}\left[\left(2\lambda t-2\sqrt{\lambda}-1\right)e^{-2\sqrt{\lambda}}+e^{-2\lambda t}\right].\label{to0}\end{align}
Since the right-hand side of formula (\ref{to0}) converges to 0 as $\lambda,c\to+\infty$, the integral over $\Gamma_1$ must converge to 0 too. The same result holds for all four sets $\Gamma_j,\,j=0,1,2,3$. Hence, formula (\ref{zonti}) implies that
\begin{equation}\lim_{\lambda,c\to+\infty}c^2\int_0^t\int_0^t e^{-2\lambda\lvert x-y\lvert}\sigma(x)\sigma(y)\mathop{dx}\mathop{dy}=\lim_{\lambda,c\to+\infty}c^2\iint_{\substack{Q}}e^{-2\lambda\lvert x-y\lvert}\sigma(x)\sigma(y)\mathop{dx}\mathop{dy}.\label{zonti2}\end{equation} To compute the limit, we reformulate the right-hand side of equation (\ref{zonti2}) by observing that, in view of the change of variables $$\begin{cases}u=\frac{x+y}{2}\\v=\frac{x-y}{2}\end{cases}$$ it holds that
\begin{equation}c^2\iint_{\substack{Q}}e^{-2\lambda\lvert x-y\lvert}\sigma(x)\sigma(y)\mathop{dx}\mathop{dy}=2c^2\int_{\frac{1}{2\sqrt{\lambda}}}^{t-\frac{1}{2\sqrt{\lambda}}}\int_{-\frac{1}{2\sqrt{\lambda}}}^{\frac{1}{2\sqrt{\lambda}}} e^{-4\lambda\lvert v\lvert}\sigma\left(u+v\right)\sigma\left(u-v\right)\mathop{dv}\mathop{du}.\label{zonti99}\end{equation} By applying the mean value theorem for integrals, there exists a real number $\varepsilon(\lambda)\in[-\frac{1}{2\sqrt{\lambda}},\frac{1}{2\sqrt{\lambda}}]$ such that \begin{align}\int_{-\frac{1}{2\sqrt{\lambda}}}^{\frac{1}{2\sqrt{\lambda}}} e^{-4\lambda\lvert v\lvert}\sigma\left(u+v\right)\sigma\left(u-v\right)\mathop{dv}=&\sigma\big(u+\varepsilon(\lambda)\big)\,\sigma\big(u-\varepsilon(\lambda)\big)\int_{-\frac{1}{2\sqrt{\lambda}}}^{\frac{1}{2\sqrt{\lambda}}} e^{-4\lambda\lvert v\lvert}\mathop{dv}\nonumber\\=&\sigma\big(u+\varepsilon(\lambda)\big)\,\sigma\big(u-\varepsilon(\lambda)\big)\,\frac{1-e^{-2\sqrt{\lambda}}}{2\lambda}.\label{zonti100}\end{align} By substituting formula (\ref{zonti100}) into (\ref{zonti99}) we obtain
\begin{equation}c^2\iint_{\substack{Q}}e^{-2\lambda\lvert x-y\lvert}\sigma(x)\sigma(y)\mathop{dx}\mathop{dy}=\frac{c^2}{\lambda}\left(1-e^{-2\sqrt{\lambda}}\right)\int_{\frac{1}{2\sqrt{\lambda}}}^{t-\frac{1}{2\sqrt{\lambda}}}\sigma\big(u+\varepsilon(\lambda)\big)\,\sigma\big(u-\varepsilon(\lambda)\big)\mathop{du}.\label{zonti101}\end{equation} Hence, by combining formulas (\ref{zonti2}) and (\ref{zonti101}) and observing that $\lim_{\lambda\to+\infty}\varepsilon(\lambda)=0$, the proof is complete.
\end{proof}

\section{Financial applications}
Although the core of the present paper is primarily theoretical, this section explores a potential financial application of our results, focusing on multivariate motions with space-varying velocities. Financial applications of univariate finite-velocity random motions have been studied by several authors, mainly in the context of financial derivatives modeling and pricing (see, for instance, Di Crescenzo and Pellerey \cite{dicrescenzo}, Ratanov and Melnikov \cite{ratanovmelnikov}, López and Ratanov \cite{lopezratanov} and references therein). The key underlying idea is the following. Let $\{\mathcal{T}(t)\}_{t\ge0}$ be a standard telegraph process with constant velocity $c>0$ and direction changes paced by a homogeneous Poisson process with intensity $\lambda$. In principle, the process $\mathcal{T}(t)$ can be employed as a finite-velocity counterpart of the standard Brownian motion, and a finite-velocity geometric Brownian motion can be defined accordingly. In particular, as mentioned in section \ref{prelimsec}, one can define the process \begin{equation}X(t)=x_0\,e^{\mathcal{T}(t)}\label{FVBS}\end{equation} with $x_0>0$. Under the hydrodynamic limit for $\lambda,c\to+\infty$ with $\frac{\lambda}{c^2}\to1$, the process $X(t)$ converges in distribution to a geometric Brownian motion, that is $$\lim_{\lambda,c\to+\infty}X(t)\overset{i.d.}{=}x_0\,e^{B(t)}$$ where $\{B(t)\}_{t\ge0}$ represents a standard Brownian motion. The geometric Brownian motion is widely used in practice to model stock prices in financial markets. Although empirical evidence indicates that log-returns often exhibit heavier tails than the Gaussian distribution, the lognormal model for stock prices provides a reasonable approximation while maintaining high analytical tractability. For this reason, ever since Black and Scholes \cite{blackscholes} published their seminal article, the geometric Brownian motion has consistently gained considerable popularity among both practitioners and academics for financial modeling. The process (\ref{FVBS}) provides a finite-velocity alternative to the Black-Scholes model. Under the assumption that the dynamics of the stock's price is driven by a telegraph process, the problem of pricing European options was investigated by López and Ratanov \cite{lopezratanov}. We emphasize that a compound-Poisson jump component is usually included in the process (\ref{FVBS}) in order to rule out arbitrage opportunities (we refer to Di Crescenzo and Pellerey \cite{dicrescenzo} and Ratanov and Melnikov \cite{ratanovmelnikov} for details). We omit this component for the sake of simplicity, since its inclusion does not affect the validity of the arguments that follow.\\
\noindent When modeling financial instruments, univariate models are commonly employed for pricing and hedging purposes. However, in risk management, these models are typically insufficient, as they fail to capture the dependencies among financial variables that are crucial for assessing portfolio risk. For instance, in portfolio optimization problems, the correlation between the prices of different market stocks must be taken into account, thus requiring the adoption of suitable multivariate models. For mathematical discussions on the role of correlation in portfolio composition and risk management, we refer to the classical work by Markowitz \cite{Markowitz}. Restricting the analysis to the bivariate case for simplicity, the correlation between two stocks in the Black-Scholes framework can be modeled by considering two correlated standard Brownian motions $\{B_1(t)\}_{t\ge0}$ and $\{B_2(t)\}_{t\ge0}$ such that $\mathbb{E}\left[B_1(t)\,B_2(t)\right]=\rho t$ for all $t>0$, with $\rho\in(-1,1)$. Thus, a bivariate extension of the Black-Scholes model can be obtained by defining the bivariate process $\big(X(t),Y(t)\big)$ as
\begin{equation}\label{bivariateGBM}\begin{cases}X(t)=x_0\,e^{\left(\mu-\frac{\sigma^2}{2}\right) t + \sigma B_1(t)}\\Y(t)=y_0\,e^{\left(\kappa-\frac{\eta^2}{2}\right) t + \eta B_2(t)}\end{cases}\end{equation} where $\mu,\kappa\in\mathbb{R}$ and $x_0,y_0,\sigma,\eta>0$. Equivalently, the process (\ref{bivariateGBM}) can be represented as an It\^{o} diffusion by means of the stochastic differential equations
\begin{equation}\label{bivariateGBMITO}\begin{cases}dX(t)=\mu X(t)\mathop{dt}+\sigma X(t)\,dB_1(t)\\dY(t)=\kappa Y(t)\mathop{dt}+\eta Y(t)\,dB_2(t)\end{cases}\end{equation} subject to the initial conditions $X(0)=x_0$ and $Y(0)=y_0$. By interpreting the processes $X(t)$ and $Y(t)$ as the prices of two distinct stocks at time $t$, formula (\ref{bivariateGBMITO}) clearly describes the dynamics of two correlated stocks, each modeled as a geometric Brownian motion. The dependence between the stock prices is captured by the correlation coefficient $\rho$ between the Brownian motions $B_1(t)$ and $B_2(t)$.\\
Within the framework of financial modeling with telegraph processes, one may be interested in constructing a finite-velocity analogue of the bivariate model (\ref{bivariateGBMITO}). This involves constructing a finite-velocity bivariate process $\big(X(t),Y(t)\big)$ which converges, in the hydrodynamic limit, to the bivariate correlated geometric Brownian motion (\ref{bivariateGBMITO}). The aim of the present section of the paper is to illustrate how this construction can be achieved by extending the ideas described in section \ref{sec:multivariate}.\\
Consider a planar finite-velocity random motion $\big((X(t),Y(t)\big)$ with orthogonal directions $d_j,\;j=0,1,2,3$. The intensity of the of the Poisson process which governs the direction changes is denoted by $\lambda$, and the space-varying velocity of the process $\big((X(t),Y(t)\big)$ is assumed to be $$\mathtt{v}(x)=c\,x$$ with $c>0$. In section \ref{sec:multivariate}, we assumed that the process turns clockwise or counterclockwise with equal probability $\frac{1}{2}$ at each Poisson event. We now adopt a slight generalization of this rule, which was first introduced in the paper by Marchione and Orsingher \cite{marchione}. In particular, we assume that the direction $D(t)$ of $\big((X(t),Y(t)\big)$ is a continuous-time Markov process with state space $\{d_0,d_1,d_2,d_3\}$ and  generator matrix
\begin{equation}\label{Generator}G=\begin{pmatrix}-\lambda  & \lambda p  & 0 & \lambda (1-p)  \\ \lambda p  & - \lambda  &  \lambda (1-p)  & 0 \\ 0 & \lambda (1-p) & -\lambda & \lambda p \\ \lambda(1-p) &0 &\lambda p & -\lambda\end{pmatrix}\end{equation} with $p\in[0,1]$. An intuitive description of the rules which govern the direction changes is given in figure \ref{fig:directionchanges}.\\
\begin{figure}[h]
\centering
\includegraphics[scale=0.65]{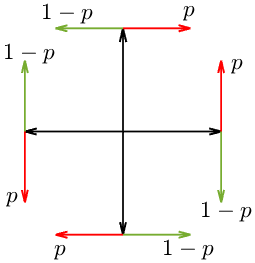}
\caption{the black arrows indicate the four possible directions of motion for the process $\big(X(t), Y(t)\big)$. For each direction, the colored arrows show the possible transitions when a direction change occurs: red arrows correspond to directions changes that occur with probability $p$, while green arrows correspond to direction changes that occur with probability $1-p$.}
\label{fig:directionchanges}
\end{figure}
\noindent We denote the bounded support of  $\big(X(t), Y(t)\big)$ as $R_t$. Consider the probability density function of $\big(X(t), Y(t)\big)$ in the interior of $R_t$, that is
$$f(x,y,t)\mathop{dx}\mathop{dy}=\mathbb{P}\big(X(t)\in\mathop{dx},\,Y(t)\in\mathop{dy}\big),\qquad (x,y)\in \text{Int}(R_{t}),\,t>0.$$
and, for $j=0,1,2,3$, define the auxiliary density functions
$$f_j(x,y,t)\mathop{dx}\mathop{dy}=\mathbb{P}\big(X(t)\in\mathop{dx},\,Y(t)\in\mathop{dy},\,D(t)=d_j\big),\;(x,y)\in \text{Int}(R_{t}),\,t>0.$$  Clearly, $f(x,y,t)=\sum_{j=0}^3 f_j(x,y,t).$ In view of the discussions presented in the previous sections, it can be verified that the following system of partial differential equations holds:
\begin{equation}
\begin{dcases}
\frac{\partial f_0}{\partial t}=-c\frac{\partial}{\partial x}\big[x\,f_0\big]+\lambda p f_1+\lambda(1-p) f_3-\lambda f_0\\
\frac{\partial f_1}{\partial t}=-c\frac{\partial}{\partial y}\big[y\,f_1\big]+\lambda p f_0+\lambda(1-p) f_2-\lambda f_1\\
\frac{\partial f_2}{\partial t}=c\frac{\partial}{\partial x}\big[x\,f_2\big]+\lambda(1-p) f_1+\lambda p f_3-\lambda f_2\\
\frac{\partial f_3}{\partial t}=c\frac{\partial}{\partial y}\big[y\,f_3\big]+\lambda(1-p) f_0+\lambda p f_2-\lambda f_3.
\end{dcases}\label{geomplanar}
\end{equation}
In order to find the exact distribution of $\big(X(t), Y(t)\big)$, we first use the transformation (\ref{UVdef}) to define the process $\big(U(t), V(t)\big)$ as
\begin{equation}\begin{cases}U(t)=\log\left(\frac{X(t)}{x_0}\right)\\V(t)=\log\left(\frac{Y(t)}{y_0}\right).\end{cases}\label{tardi}\end{equation} Clearly, the process $\big(U(t), V(t)\big)$ has constant velocity $c$, and its support coincides with the square $S_{t}$ defined in formula (\ref{supportgenerali}). Denoting by $g_j(x,t,t),\;j=0,1,2,3$, the density functions 
$$g_j(x,y,t)\mathop{du}\mathop{dv}=\mathbb{P}\big(U(t)\in\mathop{du},\,V(t)\in\mathop{dv},\,D(t)=d_j\big),\; (u,v)\in \text{Int}(S_{t}),\,t>0$$
formulas (\ref{geomplanar}) and (\ref{tardi}) imply that
\begin{equation}
\begin{dcases}
\frac{\partial g_0}{\partial t}=-c\frac{\partial g_0}{\partial u}+\lambda p\,g_1+\lambda(1-p) g_3-\lambda g_0\\
\frac{\partial g_1}{\partial t}=-c\frac{\partial g_1}{\partial v}+\lambda p\,g_0+\lambda(1-p) g_2-\lambda g_1\\
\frac{\partial g_2}{\partial t}=c\frac{\partial g_2}{\partial u}+\lambda(1-p) g_1+\lambda p\,g_3-\lambda g_2\\
\frac{\partial g_3}{\partial t}=c\frac{\partial g_3}{\partial v}+\lambda(1-p) g_0+\lambda p\,g_2-\lambda g_3.
\end{dcases}\label{geomplanar2}
\end{equation}
The system (\ref{geomplanar2}) implies that the process $\big(U(t), V(t)\big)$ has constant velocity and switching rule given by formula (\ref{Generator}). This process was studied by Marchione and Orsingher \cite{marchione}, who proved that its probability density function $g(u,v,t)$
satisfies the fourth-order partial differential equation
\begin{align}\Bigg\{\left(\frac{\partial}{\partial t}+\lambda\right)^4-\bigg(c^2&\Delta+2\lambda^2\Big(p^2+(1-p)^2\Big)\bigg)\left(\frac{\partial}{\partial t}+\lambda\right)^2+c^4\frac{\partial^4}{\partial u^2\partial v^2}\nonumber\\
&+2\lambda^2c^2(1-2p)\frac{\partial^2}{\partial u\,\partial v}+\lambda^4(1-2p)^2\Bigg\}g=0\label{fourthorderpde}\end{align}
\noindent where $\Delta=\frac{\partial^2}{\partial u^2}+\frac{\partial^2}{\partial v^2}$ represents the bivariate Laplacian. The authors also investigated the hydrodynamic limit of equation (\ref{fourthorderpde}). In particular, they proved that, for $\lambda,c\to+\infty$ with $\frac{\lambda}{c^2}\to1$, the following limiting partial differential equation is obtained
\begin{equation}\frac{\partial g}{\partial t}=\frac{\Delta g}{8p(1-p)}+\frac{2p-1}{4p(1-p)}\,\frac{\partial^2 g}{\partial u\,\partial v}.\label{hydropde}\end{equation}
Thus, by combining equation (\ref{hydropde}) with formula (\ref{tardi}), it can be verified that the probability density function $f(x,y,t)$ of $\big(X(t), Y(t)\big)$ satisfies, in the hydrodynamic limit, the partial differential equation
\begin{align}\label{kolmogorovequation}
\frac{\partial f}{\partial t}=&\frac{f}{4p(1-p)}+\frac{3}{8p(1-p)}\left(x\frac{\partial f}{\partial x}+y\frac{\partial f}{\partial y}\right)\nonumber\\
&\;+\frac{1}{8p(1-p)}\left(x^2\frac{\partial^2 f}{\partial x^2}+y^2\frac{\partial^2 f}{\partial y^2}\right)+\frac{2p-1}{4p(1-p)}\,\frac{\partial^2}{\partial x\,\partial y}\big(xy\,f\big).\end{align}
It is now a matter of straightforward calculation to verify that equation (\ref{kolmogorovequation}) coincides with the Kolmogorov forward equation of the diffusion process (\ref{bivariateGBMITO}) with parameters \begin{equation}\label{paramito}\mu=\kappa=\frac{1}{8p(1-p)},\qquad \sigma^2=\eta^2=\frac{1}{4p(1-p)},\qquad \rho=2p-1.\end{equation}
In other words, the process $\big(X(t), Y(t)\big)$ represents the desired finite-velocity counterpart of the bivariate correlated geometric Brownian motion (\ref{bivariateGBMITO}). Thus, within the framework of financial modeling with finite-velocity random motions, this process provides a description of the dynamics of two dependent stocks, where the parameter $p$ controls the dependence between the marginals. Observe that the parametrization (\ref{paramito}) entails no loss of generality, since any more general parametrization can be achieved by means of trivial transformations of the components of the random vector $\big(X(t), Y(t)\big)$. Finally, we remark that the exact distribution of $\big(X(t), Y(t)\big)$ can be obtained. Indeed, it was proved by Marchione and Orsingher \cite{marchione} that
\begin{align}\mathbb{P}\Big(&U(t)\in du,\,V(t)\in dv\Big)\nonumber\\=&\frac{e^{-\lambda t}}{2c^2}\bigg[\lambda(1-p) \, I_0\bigg(\frac{\lambda(1-p)}{c}\sqrt{c^2t^2-(u+v)^2}\bigg)\nonumber\\&\qquad\qquad+\frac{\partial}{\partial t}I_0\bigg(\frac{\lambda(1-p)}{c}\sqrt{c^2t^2-(u+v)^2}\bigg)\bigg]\nonumber\\
&\;\cdot\left[\lambda p\, I_0\left(\frac{\lambda p}{c}\sqrt{c^2t^2-(u-v)^2}\right)+\frac{\partial}{\partial t}I_0\left(\frac{\lambda p}{c}\sqrt{c^2t^2-(u-v)^2}\right)\right]\,du\,dv.\nonumber\end{align}
Therefore, in view of the transformation (\ref{tardi}), it follows that
\begin{align}\mathbb{P}\Big(&X(t)\in dx,\,Y(t)\in dy\Big)\nonumber\\=&\frac{e^{-\lambda t}}{2c^2xy}\left[\lambda(1-p) \, I_0\left(\frac{\lambda(1-p)}{c}\sqrt{c^2t^2-\log\left(\frac{xy}{x_0\,y_0}\right)^2}\,\right)\right.\nonumber\\
&\qquad\qquad\qquad+\left.\frac{\partial}{\partial t}I_0\left(\frac{\lambda(1-p)}{c}\sqrt{c^2t^2-\log\left(\frac{xy}{x_0\,y_0}\right)^2}\,\right)\right]\nonumber\\
&\qquad\cdot\left[\lambda p\, I_0\left(\frac{\lambda p}{c}\sqrt{c^2t^2-\log\left(\frac{x\,y_0}{x_0\,y}\right)^2}\,\right)\right.\nonumber\\
&\qquad\qquad\qquad+\left.\frac{\partial}{\partial t}I_0\left(\frac{\lambda p}{c}\sqrt{c^2t^2-\log\left(\frac{x\,y_0}{x_0\,y}\right)^2}\,\right)\right]\,dx\,dy.\label{fconclude}\end{align}
Observe that, in principle, the asymptotic distribution of the random vector $\big(X(t), Y(t)\big)$ can be obtained directly by taking the limit of the density function (\ref{fconclude}). Indeed, Orsingher \cite{orsingherlimiting} proved that
\begin{equation}\lim_{\lambda,c\to+\infty}\frac{e^{-\lambda t}}{2c}\left[\lambda\, I_0\left(\frac{\lambda}{c}\sqrt{c^2t^2-x^2}\right)\frac{\partial}{\partial t}I_0\left(\frac{\lambda}{c}\sqrt{c^2t^2-x^2}\,\right)\right]=\frac{1}{\sqrt{2\pi \sigma^2 t}}\,e^{-\frac{x^2}{2\sigma^2 t}}\label{limitinggauss}\end{equation} where the limit is taken under the scaling $\frac{\lambda}{c^2}\to\frac{1}{\sigma^2}$. By taking the limit of formula (\ref{fconclude}), and using (\ref{limitinggauss}), it follows that
\begin{align}\lim_{\lambda,c\to+\infty}&\mathbb{P}\Big(X(t)\in dx,\,Y(t)\in dy\Big)\nonumber\\
&=\frac{\sqrt{p(1-p)}}{\pi t\,x\,y}\,\exp\left\{-\frac{p\log\left(\frac{xy}{x_0y_0}\right)^2+(1-p)\log\left(\frac{x\,y_0}{x_0\,y}\right)^2}{2t}\right\},\; x,y>0\label{finalformularoby}\end{align} where the limit is taken in such a way that $\frac{\lambda}{c^2}\to1.$ Clearly, the density function (\ref{finalformularoby}) coincides with the distribution of the bivariate diffusion process (\ref{bivariateGBM}).  This confirms that the random vector $\big(X(t), Y(t)\big)$ converges in distribution, in the hydrodynamic limit, to a bivariate geometric Brownian motion with dependent components.

\bibliographystyle{plain}
\nocite{*}
\bibliography{bibliography}
\end{document}